\theoremstyle{plain}
\newtheorem{theorem}{Theorem}
\newtheorem{lemma}{Lemma}
\newtheorem{proposition}{Proposition}
\newtheorem{corollary}{Corollary}
\newtheorem{remark}{Remark}
\newtheorem{assumption}{Assumption}
\theoremstyle{definition}
\newtheorem{definition}{Definition}
\begin{document}

\title{\LARGE
Koopman Operators for Global Analysis of Hybrid Limit-Cycling Systems: Construction and Spectral Properties\\
\thanks{The work was partially supported by JSPS KAKENHI (Grant No. 23H01434) and JSPS Bilateral Collaborations (Grant No. JPJSBP120242202). 
}
}

\author{Natsuki~Katayama$^1$\thanks{$^1$Natsuki Katayama and Yoshihiko Susuki are with the Department of Electrical Engineering, Kyoto University, Katsura, Nishikyo-ku, Kyoto 615-8510, Japan. E-mails: \texttt{n-katayama@dove.kuee.kyoto-u.ac.jp}, \texttt{susuki.yoshihiko.5c@kyoto-u.ac.jp}} and Yoshihiko~Susuki$^1$}

\maketitle

\begin{abstract}
This paper reports a theory of Koopman operators for a class of hybrid dynamical systems with 
globally asymptotically stable periodic orbits, called 
hybrid limit-cycling systems. 
We leverage 
smooth structures intrinsic to the hybrid dynamical systems, thereby extending the existing theory of Koopman operators 
for smooth dynamical systems. 
Rigorous construction of an observable space is carried out to preserve the 
inherited smooth structures of the hybrid dynamical systems. 
Complete spectral characterization of the Koopman operators acting on the constructed space is then derived where the 
existence and uniqueness of their eigenfunctions 
are ensured. 
Our results facilitate 
global analysis of hybrid dynamical systems using the Koopman operator.
\end{abstract}

\begin{IEEEkeywords}
Hybrid Dynamical Systems, Hybrid Limit Cycles, Koopman Operator, Spectral Properties
\end{IEEEkeywords}

\IEEEpeerreviewmaketitle

\section{Introduction}
Hybrid dynamical systems 
are the form of mathematical modeling of complex dynamics involving both continuous flows and discrete transitions \cite{van2007introduction,goebel2009hybrid}. 
Global theory of hybrid dynamical systems, as a generalization of that of smooth dynamical systems, is still challenging in the modern systems and control theory: see, e.g., \cite{simic2001structural,simic2002hybrid}. 
In this paper, we develop a global theory of hybrid dynamical systems with 
asymptotically stable periodic orbits in the Koopman operator framework \cite{mauroy2020koopman}. 

The Koopman operator for a dynamical system, defined as a linear operator on observables (functions on the 
state space of the system), offers a linear perspective that captures global information (all evolutions of states) of the system through its spectral properties. 
Its eigenfunctions reveal geometric properties of the state space
globally \cite{mauroy2013isostable}: for a system 
with an asymptotically stable periodic orbit, one 
specific eigenfunction captures the so-called 
isochrons \cite{mauroy2012use}, while others capture novel 
geometric features \cite{shirasaka2017phase_isostable,monga2019phase} including isostables \cite{wilson2016isostable}. 
These results are established only for smooth dynamical systems, where smooth eigenfunctions are guaranteed to exist uniquely \cite{kvalheim2021existence,mezic2020spectrum}. 
No theoretical research of Koopman operators for global analysis of non-smooth or hybrid dynamical systems is reported although their statistical properties, such as erogodicity, have been studied using the Koopman operator in \cite{govindarajan2016operator,gerlach2020koopman}.

Our research aims to develop a  theory of Koopman operators for hybrid dynamical systems, which is parallel to that for 
smooth dynamical systems (see, e.g., \cite{kvalheim2021existence}). 
In this paper, to leverage the smooth Koopman operator theory for systems with globally asymptotically stable limit cycles \cite{kvalheim2021existence}, called limit-cycling systems, we focus on 
hybrid dynamical systems with globally asymptotically stable periodic orbits \cite{simic2002hybrid,shirasaka2017phase_hybrid}, called \emph{hybrid} limit-cycling systems. 
The hybrid limit-cycling systems are prevalent in many physical and engineering applications, including robotic locomotion \cite{grizzle2001asymptotically,morris2005restricted}, power generation \cite{hiskens2007switching}, and neural spiking \cite{lou2015results}.
Technically, we focus on the approach 
developed in \cite{simic2005towards,burden2015model}, which introduce as a generalization of smooth manifolds of smooth dynamical systems the so-called \emph{hybrifold}. 
This approach involves to \emph{glue together manifolds} corresponding to state spaces of individual modes of a hybrid dynamical system, each with locally smooth dynamics, thereby enabling us to extend the theory of smooth systems to hybrid ones.
%

This paper reports the theoretical foundation of Koopman operators for hybrid limit-cycling systems.
We construct a (semi-)group of Koopman operators with suitable spectral properties while preserving a smooth structure of the hybrifold. 
Our first contribution is to present a rigorous treatment of the choice of observable space on which the Koopman operators act, and that preserves the smooth structure of the hybridfold.
Within this chosen space, we present their complete spectral characterization that is our second contribution. 
Specifically,
the existence and uniqueness of eigenfunctions of the Koopman operators are presented. 
The spectral characterization leads the existence result of linear embeddings for the hybrid limit-cycling systems. 

The rest of this paper is organized as follows: 
In Section~\ref{sec:Hybrid_intro}, we introduce a hybrid dynamical system with a hybrid limit cycle. 
In Section~\ref{sec:Koopman}, we define the Koopman operators and describe the structure of their observable space.
In Section~\ref{sec:KEF}, we demonstrate the existence and uniqueness of the eigenfunctions of the Koopman operators. 
An illustrative example is introduced in Section~\ref{sec:example}. 
Conclusions are presented in Section~\ref{sec:conclusion} with a brief summary and future directions. 
The proofs of lemmas and theorems are included in Appendix \ref{app:proofs}.

\section{Hybrid Limit-Cycling Systems}
\label{sec:Hybrid_intro}

In this section, we introduce a hybrid dynamical system based on the work on \cite{burden2015model}. 
To study the qualitative behaviors near periodic orbits of hybrid systems, the authors of \cite{burden2015model} define a domain as a disjoint set of manifolds, where each manifold corresponds to the mode. 
Because the discrete transition is trivial under the assumption that the modes change periodically, they omit detailed notation concerning the discrete variables. 
Following their notations in \cite{burden2015model}, we introduce the deterministic \emph{hybrid dynamical system} $H=(J,M,F,G,R)$ as follows: 
\begin{itemize}
    \item $J=\{1,2,\ldots,|J|\}$ is a set of discrete-valued variables. $j\in J$ is an equivalent class operated by modulo $|J|$; 
    \item $M=\amalg_{j\in J} M^{(j)}$ is a set of continuous-valued variables, where $M^{(j)}$ is an $n$-dimensional ${C}^r$ manifold with boundary $\partial M^{(j)}$; 
    \item $F:M\to {\rm T}M$ is a ${C}^r$ vector field, where ${\rm T}M$ is a tangent bundle; 
    \item $G=\amalg_{j\in J} G^{(j)}$ is a guard, where $G^{(j)} \subset \partial M^{(j)}$ is a codimension-1 ${C}^r$ manifold; 
    \item $R:G\to \partial M$ ($\partial M:= \amalg_{j\in J} \partial M^{(j)}$) is a ${C}^r$-class reset map. 
\end{itemize}

Here, we investigate $H$ using the gluing technique \cite{simic2005towards}, which requires appropriate assumptions. 
Furthermore, because we will define a flow similarly to smooth dynamical systems, it is necessary to guarantee executions from arbitrary initial states over $[0,+\infty)$. 
Therefore, as in \cite{burden2015model,simic2005towards}, we make the following assumptions:
\begin{assumption}
    \label{ass:transverseG}
    $F$ is strictly outward-pointing \cite[p118]{Lee_smooth} on $G$. 
\end{assumption}
\begin{assumption}
    \label{ass:transverseRG}
    For all $j\in J$, $R|_{G^{(j)}}:G^{(j)}\to \partial M^{(j+1)}$ is diffeomorphic onto its image and $F^{(j+1)}$ is strictly inward-pointing \cite[p118]{Lee_smooth} on $R(G^{(j)})$. 
\end{assumption}
\begin{assumption}
    \label{ass:Nointersect}
    For all $j\in J$, there is no intersection between the closure of $G^{(j)}$ and the closure of $R(G^{(j-1)})$. 
\end{assumption}
\begin{assumption}
    \label{ass:finitejump}
    All of the maximal integral curve \cite[p212]{Lee_smooth} from $x\in M^{(j)}$ reach $G^{(j)}$ for all $j\in J$. 
\end{assumption}
Here, we explain the meaning of these assumptions. 
Assumptions \ref{ass:transverseG} and \ref{ass:transverseRG} are necessary to glue the manifolds via the reset map $R$. 
Additionally, Assumption \ref{ass:transverseG} ensures the uniqueness of executions. 
Assumptions \ref{ass:Nointersect} and \ref{ass:finitejump} are required to guarantee the execution over $[0,\infty)$. 
More specifically, Assumption \ref{ass:Nointersect} excludes the so-called Zeno phenomenon \cite{morris2005restricted} and Assumption \ref{ass:finitejump} excludes the cases in which an integral curve reaches $\partial M^{(j)} \backslash G^{(j)}$, where no reset map is defined. 
Under these assumptions, one can define the execution $x(t)$ of the hybrid dynamical system $H$ over $t\in [0,+\infty)$. 
\begin{definition}
    \label{def:execution}
    An \emph{execution} of the hybrid dynamical system $H$ is a right-continuous function $x:[0,+\infty)\to M$ such that; 
    \begin{enumerate}
        \item if $x$ is continuous at $t$, then $x$ is differentiable at $t$ and $\left. \frac{{\rm d}}{{\rm d}s}x(s) \right|_{s = t} = F(x(t))$; 
        \item if $x$ is discontinuous at $t$, then the limit $x^-(t)=\lim_{s\uparrow t} x(s)$ exists, $x^-(t)\in G$ and $x(t)=R(x^-(t))$.
    \end{enumerate}
\end{definition}
For each $j\in J$, since the smooth manifold $M^{(j)}$ and the smooth vector field $F^{(j)} = F|_{M^{(j)}}$ are introduced, the associated smooth local flow $\varphi_t^{(j)}$ can be defined. 
$\varphi_t^{(j)}$ is $C^r$ with respect to $t$ and $x$ because $F^{(j)}$ is $C^r$.
Here, we define a neighborhood of $G^{(j)}$ as $N^{(j)}\in M^{(j)}$ where all integral curves from $x \in N^{(j)}$ reach to $G^{(j)}$. 
$N^{(j)}$, called a \emph{collar neighborhood} \cite[p222]{Lee_smooth}, is isomorphic to $G^{(j)}\times [0,1)$ and is ensured to exist from Assumptions \ref{ass:Nointersect} and \ref{ass:finitejump}.
We denote the time it takes for the solution from the initial state $x\in N^{(j)}$ to reach $G^{(j)}$ as $\sigma^{(j)}(x)$
Then, $\sigma^{(j)}:N^{(j)}\to \mathbb{R}$, called the time-to-impact map \cite[Lemma 2]{burden2015model}, is $C^r$ map from Assumption \ref{ass:transverseG} and the inverse function theorem. 
Additionally, let $h^{(j)}(x) := \varphi_{\sigma^{(j)} (x)}^{(j)}(x)$ which projects from $N^{(j)}$ to $G^{(j)}$ along trajectories. 
This is clearly $C^r$. 
We summarize the introduced notations as in Fig \ref{fig:time-to-impact}. 
\begin{figure}
    \centering
    \includegraphics[width=0.8\linewidth]{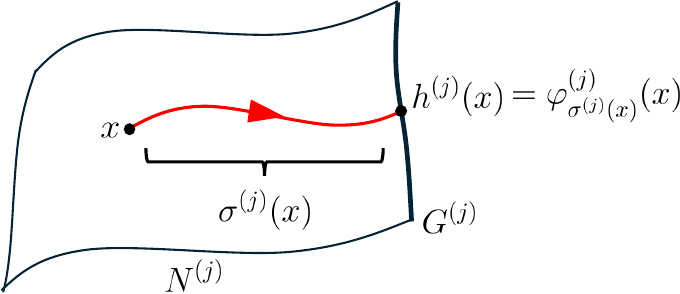}
    \caption{time-to-impact map $\sigma^{(j)}$ and projection $h^{(j)}$. }
    \label{fig:time-to-impact}
\end{figure}

Since the execution $x(t)$ is defined over $[0,+\infty)$ for all initial states, one can define the global flow. 
\begin{definition}
    \label{def:flow}
   Let $\varphi_t (x)$ an execution from an initial state $x\in M$. 
   Then, $\varphi_t:M\to M$ defines the \emph{flow} of the hybrid dynamical system $H$.
\end{definition}
It follows that $\{ \varphi_t \}_{t>0}$ forms a 1-parameter semi-group, implying that $\varphi_{t+s} = \varphi_t \circ \varphi_s$ for $t,s>0$ and $\varphi_0 = {\rm i.d.}$.
Under the above definitions, the following lemma holds and plays a key role to derive our results (see also \cite[Theorem 3]{burden2015model} on which our proof is based). 
\begin{lemma}
\label{lem:smoothing}
    Let $H$ be a hybrid dynamical system satisfying Assumptions \ref{ass:transverseG}-\ref{ass:finitejump} and let $\varphi_t$ be its flow.
    Then, there exists a triple $n$-dimensional $C^r$ manifold $\tilde{M}$, $C^r$ flow $\{ \tilde{\varphi}_t :\tilde{M} \to \tilde{M}\}_{t>0}$, and $C^r$ surjective map $\pi:M\to \tilde{M}$ such that; 
    \begin{equation}
        \label{eq:conjugacy}
        \textstyle
         \tilde{\varphi}_t \circ \pi (x) = \pi \circ \varphi_t (x),\quad
        \forall x\in M, ~ \forall t \in [0,\infty).
    \end{equation}
    Furthermore, for all $j\in J$, the restriction ${\pi}|_{M^{(j)}}$ is a ${C}^r$ diffeomorphism. 
\end{lemma}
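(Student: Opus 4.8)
The plan is to construct $\tilde{M}$ explicitly as a quotient space obtained by gluing the mode manifolds $M^{(j)}$ along the reset map, then to transport the local flows $\varphi^{(j)}_t$ to this quotient and verify the required smoothness. Concretely, I would first use the collar neighborhoods: by Assumption~\ref{ass:transverseRG}, $R|_{G^{(j)}}$ is a $C^r$ diffeomorphism onto $R(G^{(j)}) \subset \partial M^{(j+1)}$, and by Assumption~\ref{ass:transverseG} (resp.\ the inward-pointing condition) the vector field is transverse there, so $M^{(j)}$ admits a collar neighborhood $N^{(j)} \cong G^{(j)} \times [0,1)$ of $G^{(j)}$ and $M^{(j+1)}$ admits a collar neighborhood of $R(G^{(j)})$. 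One then defines an equivalence relation on $M = \amalg_j M^{(j)}$ identifying a boundary point $g \in G^{(j)}$ with $R(g) \in \partial M^{(j+1)}$, and sets $\tilde{M} := M/\!\sim$ with $\pi : M \to \tilde{M}$ the canonical projection. The smooth structure on $\tilde{M}$ is built by a standard manifold-gluing lemma (see, e.g., the collar-gluing construction in \cite{Lee_smooth}): the interiors of the $M^{(j)}$ provide charts, and across each identified boundary one uses the product structure of the two collars together with the diffeomorphism $R|_{G^{(j)}}$ to produce transition maps that are $C^r$; Assumption~\ref{ass:Nointersect} guarantees that the gluing regions for different $j$ are disjoint, so these charts are mutually compatible and $\tilde{M}$ is a genuine $C^r$ manifold of dimension $n$ without boundary. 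By construction $\pi|_{M^{(j)}}$ is a $C^r$ diffeomorphism onto its image (the interior charts are literally identity maps into $\tilde{M}$, and on the collars $\pi$ restricted to a single $M^{(j)}$ is injective and smooth with smooth inverse), giving the last sentence of the statement.

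Next I would define $\tilde{\varphi}_t$ on $\tilde{M}$ by pushing forward the hybrid flow: set $\tilde{\varphi}_t(\pi(x)) := \pi(\varphi_t(x))$. The first thing to check is that this is well defined, i.e.\ independent of the representative $x$ — but the only non-trivial identification is $g \sim R(g)$, and $\varphi_t(R(g)) = \varphi_t(\text{``}\varphi_0^+\text{''} \text{ of the jump at } g)$ agrees with the continuation of the execution through $g$, so $\pi \circ \varphi_t$ respects $\sim$; the semigroup property $\tilde{\varphi}_{t+s} = \tilde{\varphi}_t \circ \tilde{\varphi}_s$ and $\tilde{\varphi}_0 = \mathrm{id}$ then follow immediately from the corresponding properties of $\varphi_t$ stated after Definition~\ref{def:flow}. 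Equation~\eqref{eq:conjugacy} is then literally the definition, so the conjugacy is automatic once well-definedness is established.

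The substantive work — and the main obstacle — is proving that $\tilde{\varphi}_t : \tilde{M} \to \tilde{M}$ is $C^r$ jointly in $(t,x)$, because the underlying hybrid flow $\varphi_t$ is only right-continuous and genuinely discontinuous at impact times. The key point is that passing to the quotient removes exactly those discontinuities: near a point of $\tilde{M}$ that is the image of a guard point, one writes $\tilde{\varphi}_t$ in the glued chart as the composition of (i) the local flow $\varphi^{(j)}_s$ up to the collar, (ii) the time-to-impact map $\sigma^{(j)}$ and the projection $h^{(j)} = \varphi^{(j)}_{\sigma^{(j)}(\cdot)}(\cdot)$ — both shown to be $C^r$ above from Assumption~\ref{ass:transverseG} and the inverse function theorem — (iii) the reset $R$, and (iv) the local flow $\varphi^{(j+1)}_{t - s - \sigma^{(j)}}$ in the next mode. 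Each factor is $C^r$, and the flow-box/collar coordinates are arranged so that the ``time remaining after the jump'' depends $C^r$-smoothly on the chart coordinates and on $t$; hence the composition is $C^r$, and this matches smoothly onto the plain $C^r$ expression $\varphi^{(j)}_t$ valid away from the guard. Here one must invoke Assumption~\ref{ass:finitejump} to ensure every trajectory actually reaches a guard (so the collar description is available along the whole orbit) and Assumption~\ref{ass:Nointersect} again to ensure that at most one jump occurs within any sufficiently small time interval, so the two-mode composition above — rather than an uncontrolled concatenation — suffices locally. This is precisely the content underlying \cite[Theorem~3]{burden2015model}, and I would cite that construction for the detailed chart-by-chart smoothness verification rather than reproducing all the coordinate computations.
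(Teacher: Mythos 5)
Your proposal follows essentially the same route as the paper: form the quotient $\tilde{M}=M/\!\sim$ under $g\sim R(g)$, build seam charts from flow-adapted collars (the time-to-impact map $\sigma^{(j)}$ and the projection $h^{(j)}$ on one side, time-since-reset on the other), and deduce smoothness of the glued flow from the $C^r$-ness of $\sigma^{(j)}$, $h^{(j)}$, $R$ and the local flows, with the detailed chart computations deferred to Burden et al.\ exactly as the paper does. The one point to make explicit is that the gluing must use the \emph{flow} collars with matched time parametrization on both sides of $R(G^{(j)})$ (this is the role of the map $\Psi$ in the paper) — with generic collars from the standard gluing lemma the quotient flow would fail to be $C^1$ across the seam — and the paper then finishes by showing the pushforward vector field ${\rm D}\pi F$ has components $(1,0,\ldots,0)$ in the seam charts, so that $\tilde{\varphi}_t$ is a coordinate translation there, rather than arguing directly on the piecewise flow map as you do.
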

\begin{remark}
In \cite{simic2005towards}, $\tilde{M}$ is called a \emph{hybrifold} and $\tilde{\varphi}_t$ a \emph{hybrid flow}. 
\end{remark}

Here, we introduce the notion of hybrid limit cycle. 
First, if there exist $x\in M$ and $\tau > 0$ such that $\varphi_\tau (x) = x$, then the set $\{ \varphi_t (x) \}_{t\in [0,\tau)}$ is called a periodic orbit. 
Especially, if there is no smaller positive number than $\tau$, $\tau$ is called a period. 
The periodic orbit is called a \emph{globally asymptotically stable hybrid limit cycle}
if it attracts all executions that start from the outside of it (see Appendix \ref{app:definition} for the precise definition). 
In this paper, we investigate hybrid dynamical systems with globally asymptotically stable hybrid limit cycles. 
Thus, the following assumption is added: 
\begin{assumption}
    \label{ass:GESHLC}
    $H$ has a globally asymptotically stable hybrid limit cycle.
\end{assumption}

\section{
Construction of Koopman Operators
}
\label{sec:Koopman}
In this section, we construct the Koopman operators that inherits the piecewise smooth structures of the hybrid system $H$. 
First of all, we introduce the Koopman operators for the flow $\varphi_t$ of the hybrid system $H$. 
\begin{definition}
\label{def:Koopman}
Consider a space $\mathcal{F}$ of observable $f:M\to \mathbb{C}$. 
The family of Koopman operators $U_t:\mathcal{F}\to \mathcal{F}$ associated with the flow $\varphi_t$ $(t\geq 0)$ is defined through the composition:
\begin{equation}
\label{eq:Koopmandef}
    U_t f (x) := f\circ \varphi_t (x) = f(\varphi_t (x)),\quad \forall f \in \mathcal{F}.
\end{equation}
\end{definition}
One can verify that $U_t$ is a linear operator and the family of Koopman operators, $\{U_t\}_{t>0}$, forms a 1-parameter semi-group. 
Due to its linearity, we define eigenvalues and associated eigenfunctions of the Koopman operators, called Koopman eigenvalues and Koopman eigenfunctions, respectively, which consists of their point spectrum. 
\begin{definition}
    \label{def:KEF}
    If there exists $\lambda\in\mathbb{C}$ and $\phi_\lambda \in \mathcal{F}\backslash \{0\}$ such that
    \begin{equation}
    \label{eq:KEFdef}
    U_t \phi_\lambda = {\rm e}^{\lambda t} \phi_\lambda,\quad \forall t>0,
    \end{equation}
    then $\lambda$ is called a Koopman eigenvalue and $\phi_\lambda$ a Koopman eigenfunction.
\end{definition}

The choice of $\mathcal{F}$ is crucial in ensuring nice spectral properties and in capturing the system's behavior. 
For smooth dynamical systems, the suitable choice of $\mathcal{F}$ is well studied, which guarantees the existence and uniqueness of the Koopman eigenfunctions \cite{kvalheim2021existence,mezic2020spectrum}. 
More specifically, if a $C^r$ dynamical system $(M,\varphi_t)$ has an asymptotically stable equilibrium point or an asymptotically stable limit cycle, then by choosing $\mathcal{F}$ as $C^r$, there exist the Koopman eigenfunctions uniquely, which capture geometric properties of the original dynamical system (see \cite{kvalheim2021existence} for more details). 
Our aim is to choose $\mathcal{F}$ for the hybrid system $H$. 
Since $H$ inherits a $C^r$ structure as claimed by Lemma \ref{lem:smoothing}, it seems possible to select $\mathcal{F}$ such that $\mathcal{F}$ preserves this smooth structure, levaraging the results of \cite{kvalheim2021existence}. 
In particular, if $\mathcal{F}$ is defined such that for all $f\in \mathcal{F}$, $f\circ \pi^{-1} \in C^r (\tilde{M})$, then the Koopman eigenfunctions exist uniquely, based on the fact that two dynamical systems $(M,\varphi_t)$ and $(\pi(M),\pi\circ\varphi_t)$ possess the same eigenvalues \cite{mezic2020spectrum}. 

With the above idea in mind, we now construct the observable space $\mathcal{F}$ that is endowed with a smooth structure. 
First, we define a $C^r$ map $\Psi^{(j)}:N^{(j)} \mapsto \Psi^{(j)} (N^{(j)}) \subset M^{(j+1)}$ as
\begin{equation}
    \label{eq:Psi}
    \Psi^{(j)} (x) := \varphi_{\sigma^{(j)} (x)}^{(j+1)} \circ R \circ h^{(j)} (x).
\end{equation}
As shown in Fig 2, $\Psi^{(j)}$ first causes $x$ to flow up to $G^{(j)}$ along $F^{(j)}$ over the time $\sigma^{(j)}$, resets to $R(G^{(j)}) \in M^{(j+1)}$, and flows along $F^{(j+1)}$ for the time $\sigma^{(j)}$. 
\begin{figure}
    \centering
    \includegraphics[width=0.95\linewidth]{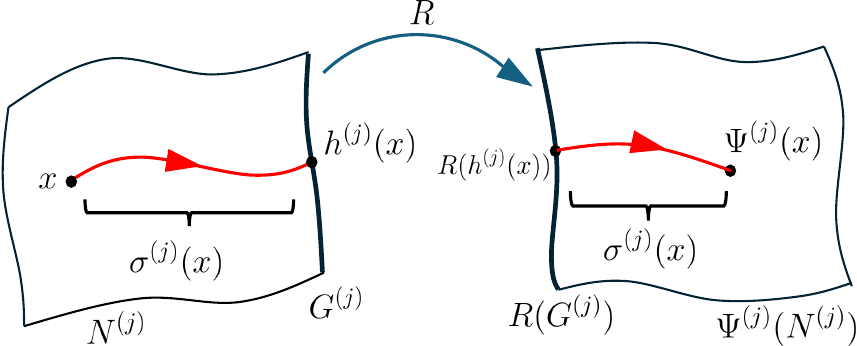}
    \caption{Construction of a map $\Psi$.}
    \label{fig:Psi}
\end{figure}
With the appropriate introduction of the collar neighborhood $N^{(j)}$, this map is well-defined, and is a diffeomorphism from Assumptions \ref{ass:transverseG} and \ref{ass:transverseRG}. 
Let $N:= \amalg_{j\in J} N^{(j)}\subset M$ and define $\Psi:N\to \Psi(N)$ such that $\Psi|_{N^{(j)}}=\Psi^{(j)}$. 
The derivative ${\rm D}\Psi$ acts as a pushforward that maps a vector field on $N$ to a vector field on $\Psi(N)$. 
Recall generally that pushforward is defined as the derivative of a diffeomorphism such that $({\rm D} \Psi F') (x):= {\rm D} \Psi (\Psi^{-1} (x)) F'(\Psi^{-1} (x))$, where $F':N\to {\rm T}N$ is a vector field on $N$, $x\in \Psi (N)$ is a point, and ${\rm D} \Psi(\Psi^{-1} (x))$ is a linear map that sends a tangent vector on ${\rm T}_{\Psi^{-1}(x)} N$ to a tangent space ${\rm T}_{x} \Psi(N)$, analogous to Jacobian (if one chooses bases of ${\rm T}_{\Psi^{-1}(x)} N$ and ${\rm T}_{x} \Psi(N)$, one can obtain the matrix representation of ${\rm D} \Psi(\Psi^{-1} (x))$). 
From Assumptions \ref{ass:transverseG} and \ref{ass:transverseRG}, ${\rm rank}\, {\rm D}\Psi (x) = n$ for all $x\in N$ holds. 
The following lemma is essential to derive the observable space. 
\begin{lemma}
\label{lem:existence_of_vector}
Under Assumptions \ref{ass:transverseG}-\ref{ass:GESHLC}, 
there exists a $n-1$-tuple $C^r$ vector fields $F_2, \ldots, F_n: N\to {\rm T} N$ such that
\begin{enumerate}[label=(\alph*)]
\item $\forall z \in G$, ${\rm T}_z G = {\rm span}\left\{ F_2(z), \ldots, F_n(z) \right\}$
\item Each pair of $F, F_2, \ldots, F_n $ is commutative on $N$.
\end{enumerate}
\end{lemma}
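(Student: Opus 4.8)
\emph{Proof plan.} The plan is to construct the fields componentwise. Since $N=\amalg_{j\in J}N^{(j)}$ and every Lie bracket in (b) is computed inside a single component $N^{(j)}$, it suffices to build, for each fixed $j\in J$, a $C^r$ $(n-1)$-tuple $F_2^{(j)},\dots,F_n^{(j)}$ on $N^{(j)}$ that pairwise commute, commute with $F^{(j)}$, and restrict on $G^{(j)}$ to a frame of $TG^{(j)}$; one then sets $F_i:=\amalg_{j\in J}F_i^{(j)}$.

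First I would introduce flow-box coordinates on $N^{(j)}$ adapted to $F^{(j)}$. The $C^r$ maps $h^{(j)}$ and $\sigma^{(j)}$ above yield a $C^r$ diffeomorphism $\Theta^{(j)}\colon N^{(j)}\to\Theta^{(j)}(N^{(j)})\subset G^{(j)}\times(-\infty,0]$, $\Theta^{(j)}(x)=(h^{(j)}(x),-\sigma^{(j)}(x))$, with $C^r$ inverse $(z,s)\mapsto\varphi_s^{(j)}(z)$ and image a neighbourhood of $G^{(j)}\times\{0\}$. Since $h^{(j)}$ is constant along orbits of $F^{(j)}$ and $\sigma^{(j)}$ decreases with unit rate along them, $\Theta^{(j)}$ carries $F^{(j)}$ to $\partial_s$. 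In these coordinates a $C^r$ vector field Lie-commutes with $\partial_s$ exactly when its coefficients are independent of $s$, and it takes values in $TG^{(j)}$ along $\{s=0\}$ exactly when it has no $\partial_s$-component; such a field therefore descends to a $C^r$ vector field on $G^{(j)}$. Conversely, for any $C^r$ vector fields $V_2^{(j)},\dots,V_n^{(j)}$ on $G^{(j)}$, the $\Theta^{(j)}$-pullbacks $F_i^{(j)}$ of the constant-in-$s$ fields $(z,s)\mapsto V_i^{(j)}(z)$ satisfy $[F^{(j)},F_i^{(j)}]=0$ automatically, $[F_i^{(j)},F_k^{(j)}]$ is the $s$-independent lift of $[V_i^{(j)},V_k^{(j)}]$, and $F_i^{(j)}|_{G^{(j)}}=V_i^{(j)}$. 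Hence the lemma reduces to the claim that \emph{$G^{(j)}$ carries $n-1$ pairwise-commuting $C^r$ vector fields forming a global frame of $TG^{(j)}$.}

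To establish the claim I would show $G^{(j)}$ is $C^r$-diffeomorphic to $\mathbb{R}^{n-1}$ and pull back the standard coordinate frame $\partial_1,\dots,\partial_{n-1}$, whose components commute and frame $T\mathbb{R}^{n-1}$, both properties being preserved under pull-back along a diffeomorphism. Here Assumption~\ref{ass:GESHLC} is essential. Via Lemma~\ref{lem:smoothing}, $\pi|_{M^{(j)}}$ is a $C^r$ diffeomorphism, so $\pi(G^{(j)})$ is an embedded $C^r$ hypersurface of the hybrifold $\tilde M$ (disjointness of the relevant hypersurfaces coming from Assumption~\ref{ass:Nointersect}); it is transverse to $\tilde\varphi_t$ by Assumption~\ref{ass:transverseG}, and since every execution of $H$ passes through $G^{(j)}$ and $\pi$ intertwines the flows via~\eqref{eq:conjugacy}, $\pi(G^{(j)})$ meets every orbit of $\tilde\varphi_t$, i.e.\ it is a global cross-section. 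By Assumption~\ref{ass:GESHLC}, $\tilde M$ is the basin of attraction of a globally asymptotically stable limit cycle of the $C^r$ flow $\tilde\varphi_t$, so the basin structure theory for stable limit cycles of smooth flows \cite{kvalheim2021existence,mezic2020spectrum} identifies $\tilde M$, up to $C^r$ diffeomorphism, with the total space of an $(n-1)$-plane bundle over $S^1$, and every global cross-section of the associated flow is $C^r$-diffeomorphic to the fiber $\mathbb{R}^{n-1}$. Therefore $G^{(j)}\cong\pi(G^{(j)})\cong\mathbb{R}^{n-1}$, proving the claim. (Alternatively, and more intrinsically: the first-return map of $\tilde\varphi_t$ to $\pi(G^{(j)})$ is a $C^r$ diffeomorphism with a globally asymptotically stable fixed point whose basin is the whole section, and its suspension is again the basin of a stable limit cycle, so the same structure theorem applies.)

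The \textbf{main obstacle} is precisely this last step, ``$G^{(j)}\cong\mathbb{R}^{n-1}$'': one must verify that $\pi(G^{(j)})$ is a bona fide global cross-section (transversality from Assumption~\ref{ass:transverseG}, meeting every orbit from Assumptions~\ref{ass:Nointersect}--\ref{ass:finitejump}, embeddedness from Assumption~\ref{ass:Nointersect} and Lemma~\ref{lem:smoothing}) and then transport the smooth-flow basin structure theorem into the $C^r$ category. Everything else — the $C^r$-regularity of $\Theta^{(j)}$ and of the lifts $F_i^{(j)}$, and the bracket identities — follows directly from the regularity of $h^{(j)},\sigma^{(j)}$ and of the pulled-back frame.
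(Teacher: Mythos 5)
Your architecture is sound, and on one point it is more complete than the paper's own argument. The paper splits the proof into (i) an auxiliary lemma: if each $G^{(j)}$ is $C^r$-diffeomorphic to a subset of $\mathbb{R}^{n-1}$ via some $\mu$, then the pushforwards ${\rm D}\mu^{-1}(\partial/\partial y^i)$ give the required commuting frame (by \cite[Theorem 9.46]{Lee_smooth}); and (ii) a direct proof that each $G^{(j)}$ is so diffeomorphic, obtained by viewing $G^{(1)}$ as a Poincar\'e section, taking a forward-invariant coordinate neighborhood $U$ of the globally asymptotically stable fixed point of the return map $P_{G^{(1)}}$, and exhausting $G^{(1)}$ by the increasing chain of preimages of $U$ under iterates of $P_{G^{(1)}}$, each diffeomorphic to a subset of $\mathbb{R}^{n-1}$. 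Your flow-box reduction via $\Theta^{(j)}=(h^{(j)},-\sigma^{(j)})$, which lifts a commuting frame on $G^{(j)}$ to an $s$-independent frame on $N^{(j)}$ commuting with $F$, is exactly the mechanism the paper leaves implicit (its auxiliary lemma only defines the $F_i$ on $G^{(j)}$, not on $N^{(j)}$), so that half of your write-up is a genuine improvement in rigor.

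Where you diverge is the crux you correctly isolate. You aim for the stronger conclusion $G^{(j)}\cong\mathbb{R}^{n-1}$ by invoking the bundle structure of the basin of a stable limit cycle together with the claim that every global cross-section is $C^r$-diffeomorphic to the fiber. This overshoots (a diffeomorphism onto \emph{any} open subset of $\mathbb{R}^{n-1}$ already lets you pull back the standard commuting frame) and rests on steps that are not off-the-shelf: distinct global cross-sections of a flow need not be diffeomorphic in general (they represent different cohomology classes), the first-hit identification of $\pi(G^{(j)})$ with a fiber requires injectivity and surjectivity arguments that are delicate because $\tilde\varphi_t$ is only a forward semiflow on $\tilde M$, and the basin structure theory you cite is normally stated for hyperbolic cycles, whereas Lemma~\ref{lem:existence_of_vector} assumes only asymptotic stability (no spectral conditions). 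The paper's exhaustion argument on the section itself, driven purely by Assumption~\ref{ass:GESHLC} and the fact that $P_{G^{(1)}}$ is a diffeomorphism, sidesteps all of this and is the cleaner route; if you keep your global-structure route, the cross-section identification is where the real additional work lies.
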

Now, we are in a position to define the observable space $\mathcal{F}$ which is equipped with $k$-times differentiability ($0\leq k \leq r$). 
This space will be denoted as $\tilde{C}^k (M)$ in the following.
\begin{definition}
\label{def:tildeC}
Under Assumptions \ref{ass:transverseG}-\ref{ass:GESHLC}, 
we define $\tilde{C}^k (M)$ as an observable space such that for all $f\in \tilde{C}^k (M)$, $f|_{M^{(j)}}\in C^{k}(M^{(j)})$ and 
\begin{equation}
    \label{eq:lie}
    \begin{aligned}
        &\forall z \in G, ~\forall (l_1,\ldots,l_n) \in (\mathbb{N}_{\geq 0})^n ~{\rm s.t.}~l_1 + \ldots + l_n \leq k, \\
    & L_{F}^{l_1} L_{F_2}^{l_2} \cdots  L_{F_n}^{l_n} f (z) = L_{F}^{l_1} L_{{\rm D}\Psi F_2}^{l_2} \cdots  L_{{\rm D}\Psi F_n}^{l_n} f (R(z)),
    \end{aligned}
\end{equation}
where $\mathbb{N}_{\geq 0}$ are non-negative integers and $L_F$ is a Lie derivative with respect to a vector field $F$.
\end{definition}
One can verify that $\tilde{C}^k(M)$ is a vector space and $k < k' \Longrightarrow \tilde{C}^k (M) \subset \tilde{C}^{k'} (M)$. 
The following theorem is one of the main results (its proof is shown in Appendix \ref{app:proofs}). 
\begin{theorem}
    \label{thm:smooth}
    Let $H$ be a hybrid dynamical systems satisfying Assumptions \ref{ass:transverseG}-\ref{ass:GESHLC} and let $\varphi_t$ be its flow.  
    Consider the smooth dynamical system $(\tilde{M},\tilde{\varphi}_t)$ linked by Eq. \eqref{eq:conjugacy}. 
    Then, the following equivalence holds:
    \begin{equation}
        \label{eq:iff}
        f\in \tilde{C}^k (M) \iff f \circ \pi^{-1} \in C^k(\tilde{M}). 
    \end{equation}
    Furthermore, $\tilde{C}^k (M)$ is invariant under the action of $U_t$:
    \begin{equation}
        \label{eq:invariance}
        f \in {\tilde{C}}^k (M) \Longrightarrow U^t f \in {\tilde{C}}^k (M),~\forall t\in [0,\infty).
    \end{equation}
\end{theorem}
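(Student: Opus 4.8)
The plan is to localise both assertions to neighbourhoods of the ``seams'' $\pi(G^{(j)})=\pi(R(G^{(j)}))$ in $\tilde M$, since away from the seams the equivalence \eqref{eq:iff} is immediate: there $f|_{M^{(j)}}=(f\circ\pi^{-1})\circ(\pi|_{M^{(j)}})$ and $\pi|_{M^{(j)}}$ is a $C^r$ diffeomorphism by Lemma~\ref{lem:smoothing}, with $r\geq k$. Throughout I read ``$f\circ\pi^{-1}$'' as the unique function $g$ on $\tilde M$ with $g\circ\pi=f$; this is well defined exactly when $f$ is constant on the fibres of $\pi$, i.e.\ when $f(z)=f(R(z))$ for $z\in G$, which is the instance of \eqref{eq:lie} with $l_1=\cdots=l_n=0$, so that requirement is already implicit on both sides of \eqref{eq:iff}.

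The crux is an explicit description of the $C^r$ structure of $\tilde M$ near a seam. Fix $z\in G^{(j)}$. On the $M^{(j)}$ side, the frame $F,F_2,\dots,F_n$ of Lemma~\ref{lem:existence_of_vector} is $C^r$, pairwise commuting, and linearly independent near $z$ (by Assumption~\ref{ass:transverseG} and part~(a)), so the canonical-coordinates theorem for a commuting frame gives a $C^r$ chart $y=(y_1,\dots,y_n)$ with $\partial_{y_1}=F$ and $\partial_{y_i}=F_i$; here $G^{(j)}=\{y_1=0\}$ locally (the $F_i$ are tangent to $G^{(j)}$, so their flows preserve it, while $F$ is transverse), and $N^{(j)}$ lies on the side $\{y_1\leq 0\}$. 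On the $M^{(j+1)}$ side I would first verify $\Psi^{(j)}\circ\varphi^{(j)}_s=\varphi^{(j+1)}_{-s}\circ\Psi^{(j)}$ (from $h^{(j)}\circ\varphi^{(j)}_s=h^{(j)}$ and $\sigma^{(j)}\circ\varphi^{(j)}_s=\sigma^{(j)}-s$), whence $\mathrm{D}\Psi^{(j)}F=-F^{(j+1)}$; since pushforward by the diffeomorphism $\Psi^{(j)}$ preserves Lie brackets and $[F,F_i]=0$, each $\mathrm{D}\Psi^{(j)}F_i$ commutes with $F^{(j+1)}$, and since $\Psi^{(j)}|_{G^{(j)}}=R|_{G^{(j)}}$ one has $\mathrm{D}\Psi^{(j)}F_i=\mathrm{D}R\,F_i$ along $R(G^{(j)})$. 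Hence $F^{(j+1)},\mathrm{D}\Psi^{(j)}F_2,\dots,\mathrm{D}\Psi^{(j)}F_n$ is again a $C^r$ commuting frame, linearly independent near $R(z)$ by Assumption~\ref{ass:transverseRG}, producing a $C^r$ chart $\bar y$ with $\partial_{\bar y_1}=F^{(j+1)}$, $\partial_{\bar y_i}=\mathrm{D}\Psi^{(j)}F_i$, $R(G^{(j)})=\{\bar y_1=0\}$, and $\Psi^{(j)}(N^{(j)})$ on the side $\{\bar y_1\geq 0\}$; normalising $\bar y_i\circ R=y_i$ on $G^{(j)}$ makes the identification $z'\sim R(z')$ read as $\bar y_i=y_i$. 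The two half-charts then glue to one chart $w=(w_1,\dots,w_n)$ near $\pi(z)$ with $w_1=y_1\leq 0$ on the $M^{(j)}$ side and $w_1=\bar y_1\geq 0$ on the $M^{(j+1)}$ side; since $\partial_{w_1}$ pushes forward to the generator of $\tilde\varphi_t$ on both sides (differentiate \eqref{eq:conjugacy}), this $w$-chart is $C^r$-compatible with the hybrifold atlas of Lemma~\ref{lem:smoothing}.

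With that dictionary the rest is formal. In the $w$-chart, $L_F=\partial_{w_1}$ and $L_{F_i}=\partial_{w_i}$ on the $M^{(j)}$ side, whereas $L_F=\partial_{w_1}$ and $L_{\mathrm{D}\Psi F_i}=\partial_{w_i}$ on the $M^{(j+1)}$ side; because Lie derivatives along a commuting frame commute, the iterated operator in \eqref{eq:lie} equals the mixed partial $\partial_{w_1}^{l_1}\cdots\partial_{w_n}^{l_n}$ on each side, so \eqref{eq:lie} says precisely that, for $g=f\circ\pi^{-1}$, all one-sided mixed partials of total order $\leq k$ of the two pieces of $g$ agree on $\{w_1=0\}$. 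For $(\Leftarrow)$: $f|_{M^{(j)}}=g\circ(\pi|_{M^{(j)}})\in C^k(M^{(j)})$, and restricting $g\in C^k(\tilde M)$ to each side and reading off its partials at $w_1=0$ returns the two sides of \eqref{eq:lie}, which coincide by continuity of $\partial_w^\alpha g$ at $\pi(z)$. For $(\Rightarrow)$: away from the seams $g=f|_{M^{(j)}}\circ(\pi|_{M^{(j)}})^{-1}$ is $C^k$ because $f|_{M^{(j)}}\in C^k(M^{(j)})$; near a seam $g$ is $C^k$ on each closed half $\{\pm w_1\geq 0\}$ with all partials of order $\leq k$ matching along $\{w_1=0\}$ (this being exactly \eqref{eq:lie}), so the standard gluing lemma --- a function $C^k$ on two closed half-spaces whose partials of order $\leq k$ agree on the common hyperplane is $C^k$, by induction on $k$ --- gives $g\in C^k$ near $\pi(z)$; altogether $g\in C^k(\tilde M)$. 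Finally, invariance follows in two lines: \eqref{eq:conjugacy} gives $(U_t f)\circ\pi^{-1}=(f\circ\pi^{-1})\circ\tilde\varphi_t$, which lies in $C^k(\tilde M)$ since $f\circ\pi^{-1}\in C^k(\tilde M)$ by \eqref{eq:iff} and $\tilde\varphi_t$ is $C^r$ with $r\geq k$; applying \eqref{eq:iff} in reverse yields $U_t f\in\tilde C^k(M)$.

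The main obstacle is building this local picture: constructing the two flow-box charts out of Lemma~\ref{lem:existence_of_vector}, checking the bracket identities that make $\mathrm{D}\Psi^{(j)}F_i$ the correct transverse frame on the far side of the seam, and confirming that the glued $w$-chart genuinely belongs to the $C^r$ atlas underlying Lemma~\ref{lem:smoothing} --- i.e.\ faithfully translating the coordinate-free condition \eqref{eq:lie} into ``matching of mixed partials across the seam in a flat chart.'' Once that translation is secured, both halves of the theorem reduce to elementary multivariable calculus.
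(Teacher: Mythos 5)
Your proposal is correct and follows essentially the same route as the paper: both build a flow-box chart across the seam adapted to the commuting frame $F,F_2,\dots,F_n$ (and its pushforward under $\Psi$ on the far side), translate condition \eqref{eq:lie} into equality of the one-sided mixed partials at the gluing hyperplane, and conclude with a $C^k$ half-space gluing argument, with the invariance claim obtained from \eqref{eq:conjugacy} in exactly the same way. The only cosmetic difference is that the paper performs the derivative matching explicitly in the chart $\tilde{\eta}_\beta$ already constructed in the proof of Lemma~\ref{lem:smoothing} (iterating with auxiliary functions $f_{y^i}$ for higher orders), whereas you invoke the canonical-coordinates theorem and a named gluing lemma.
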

\begin{remark}
    \label{rem:projection}
First, we note that $f\circ \pi^{-1}$ of Eq. \eqref{eq:iff} is well-defined. 
Consider the case when $k=0$. 
In this case, the condition \eqref{eq:lie} is equivalent to
\begin{equation}
    \label{eq:lie0}
    \forall y \in G,\quad f(y) = f(R(y)).
\end{equation}
The \emph{fiber} of $\pi$, denoted by $\pi^{-1}:\tilde{M}\to M$, is given by
\begin{equation}
\label{eq:fiber}
    \pi^{-1}(\tilde{x}) = \left\{ \begin{alignedat}{2}
    &\{ x\in M ~|~ \pi(x) = \tilde{x} \},\quad& & if~ \tilde{x} \in \tilde{M}\backslash \pi (G) \\
    &\{ z\in G,~R(z) \in R(G) ~|~ & & \pi  (z) = \tilde{x} \}, \\ 
    & \quad& &if~ \tilde{x} \in \tilde{M} \cap \pi (G).
\end{alignedat} \right.
\end{equation}
The condition \eqref{eq:lie0} ensures that $f$ is constant on the fiber of $\pi$. 
Additionally, by Definition \ref{def:tildeC}, $f \in \tilde{C}^0(M)$ must satisfy $f|_{M^{(j)}} \in {C}^0 (M^{(j)})$ for all $j\in J$.
Then, there exists a function $g\in C^0 (\tilde{M})$ such that $g=f\circ \pi $, uniquely determined by \cite[Theorem A.30]{Lee_smooth}. 
In Eq. \eqref{eq:iff}, this $g$ is represented as $f\circ \pi^{-1}$. 
\end{remark}

Here, we outline the meanings of (a) and (b) of Lemma \ref{lem:existence_of_vector}, Definition \ref{def:tildeC}, and Theorem \ref{thm:smooth}, using the key mathematical concepts from smooth manifold theory. 
The reason why $n-1$-tuple vector fields $F_2,\ldots,F_n$ satisfying (a) and  (b) of Lemma \ref{lem:existence_of_vector} are introduced is to provide a \emph{chart} of \emph{atlas} covering $\tilde{M}$ in Lemma \ref{lem:smoothing}. 
If $n$-tuple vector fields $F,F_2,\ldots,F_n$ commute and their tangent vectors at $x \in M$ span the tangent space ${\rm T}_x M$, then $(F,F_2,\ldots,F_n)$ is a \emph{coordinate frame} and these vector fields form a chart \cite[Theorem 9.46]{Lee_smooth}. 
Moreover, $({\rm D}\Psi F, \ldots, {\rm D}\Psi F_n)$ is also a coordinate frame because the pushforward map preserves both the linear independence (a) and commutative property (b), thereby form charts.
These two charts have boundaries corresponding to $G$ and $R(G)$, and glue their boundaries together through the gluing map $\pi$.
Actually, smoothness on the gluing surface $\pi(G)$ persists through the gluing process. 
In other words, we have defined $\Psi$ to make the vicinity of $\pi(G)$ smooth. 
To verify the differentiability of $f\circ \pi$ on $\tilde{M}$, based on the basic definition of a differentialable function on the manifold, it is enough to verify the differentiability of $f\circ \pi$ via the charts. 
As mentioned, the charts around $\pi(G)$ are formed by the coordinate frames $(F,F_2,\ldots,F_n)$, allowing one to calculate the derivatives through Lie derivatives of $F,F_2,\ldots,F_n$. 
Eq. \eqref{eq:lie} of Definition \ref{def:tildeC} ensures the continuity of (high-order) derivatives through the charts of coordinate frames.
This is the outline of the proof of Theorem \ref{thm:smooth}.

\section{
Spectral Properties
}
\label{sec:KEF}
In this section we explore the spectrum of the Koopman operator for the hybrid system $H$. 
For a smooth dynamical system with an asymptotically stable limit cycle, Koopman eigenvalues are determined using the eigenvalues of ${\rm D}P(x^*)$, where $P$ is the associated Poincare map and $x^*$ is the fixed point. 
As described in \cite{kvalheim2021existence}, consider the flow to be $C^r$ and an observable space also to be $C^r$. 
Suppose that the system has an asymptotically stable limit cycle with period $\tau$, and that the associated Poincare map has $n-1$ eigenvalues $\rho_2,\ldots,\rho_n$ such that $1>|\rho_2|\geq \ldots \geq |\rho_n|$, called Floquet multiplicities. 
Let $\omega= \tau/2\pi$ be a angular fundamental frequency of the limit cycle and $\nu_2:=\ln ({\rho_2})/\tau,\ldots, \nu_n:=\ln ({\rho_n})/\tau$ be the Floquet exponents.
Under $r$-nonresonant and spectral spread conditions\footnote{
For all non-negative integers $m_2,\ldots,m_n$ such that $2\leq m_2+\ldots + m_n <r+1$, an $r$-nonresonant condition is described as $\nu_i \neq m_2 \nu_2 + \cdots + m_n \nu_n,\quad i=2,\ldots,n$. 
Also, a spectral spread condition is described $|\rho_n| > |\rho_2|^r$, or equivalently, ${\rm Re} \, \nu_n > r {\rm Re} \, \nu_2$.
}, 
${\rm i}\omega$ and $\nu_2,\ldots, ~\nu_n $ are Koopman eigenvalues. 
Additionally, the associated Koopman eigenfunctions $\phi_{{\rm i}\omega},~\phi_{\nu_2},\ldots, \phi_{\nu_n} \in C^r$, called Koopman principal eigenfunctions, exist uniquely. 

Here, we extend the results of \cite{kvalheim2021existence} to the case of hybrid limit cycle. 
The following theorem is our second result, which ensures the existence and uniqueness of the Koopman eigenfunctions (its proof is in Appendix \ref{app:proofs}). 
\begin{theorem}
    \label{thm:existence_KEF}
    Consider $H$ to be a hybrid dynamical system satisfying Assumptions \ref{ass:transverseG}-\ref{ass:GESHLC}. 
    The fundamental frequency $\omega$ and Floquet exponents $\nu_2, \ldots, \nu_n$ satisfy $r$-nonresonant and spectral spread conditions. 
    Then, there exist Koopman eigenfunctions $\phi_{{\rm i}\omega},~\phi_{\nu_2},\ldots, \phi_{\nu_n}$ uniquely such that 
    \begin{equation}
    \label{eq:KEFomega}
        U_t \phi_{{\rm i}\omega} = {\rm e}^{{\rm i}\omega t} \phi_{{\rm i}\omega},\quad \phi_{{\rm i}\omega}\in \tilde{C}^r(M)\backslash \{ 0 \},
    \end{equation}
    \begin{equation}
    \label{eq:KEFnu}
        U_t \phi_{\nu_i} = {\rm e}^{{\nu_i} t} \phi_{\nu_i},\quad \phi_{\nu_j}\in \tilde{C}^r(M)\backslash \{ 0 \}.
    \end{equation}
\end{theorem}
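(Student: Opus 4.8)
The plan is to reduce Theorem~\ref{thm:existence_KEF} to the smooth case treated in \cite{kvalheim2021existence} by transporting the problem through the hybrifold, using the two structural results already in hand: the conjugacy of Lemma~\ref{lem:smoothing} and the characterization of the observable space in Theorem~\ref{thm:smooth}. Concretely, let $(\tilde{M},\tilde{\varphi}_t)$ be the smooth system from Lemma~\ref{lem:smoothing}, linked to $(M,\varphi_t)$ via $\tilde{\varphi}_t\circ\pi=\pi\circ\varphi_t$. The hybrid limit cycle of $H$ (Assumption~\ref{ass:GESHLC}) maps under $\pi$ to a subset of $\tilde{M}$ that, because each $\pi|_{M^{(j)}}$ is a $C^r$ diffeomorphism and the gluing identifies the impact surface $G$ with its reset image, is a single $C^r$ periodic orbit of $\tilde{\varphi}_t$; one checks from the asymptotic stability definition in Appendix~\ref{app:definition} that this orbit is globally asymptotically stable in $\tilde{M}$. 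Thus $(\tilde{M},\tilde{\varphi}_t)$ is a $C^r$ limit-cycling system of exactly the type covered by \cite{kvalheim2021existence}.

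Next I would identify the Floquet data. I claim the period $\tau$ of the hybrid limit cycle equals the period of the hybrifold orbit, and that the eigenvalues of the derivative of the hybrifold Poincar\'e map $\tilde{P}$ at its fixed point coincide with the Floquet multiplicities $\rho_2,\dots,\rho_n$ appearing in the statement --- indeed, the hybrid Poincar\'e map is conventionally defined exactly as the composition of flow-and-reset pieces, which is (via $\pi$) conjugate to $\tilde{P}$, so their linearizations are similar and share spectra. Consequently $\omega=2\pi/\tau$ and $\nu_i=\ln(\rho_i)/\tau$ are the frequency and Floquet exponents of the smooth system, and the $r$-nonresonant and spectral-spread hypotheses transfer verbatim. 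Applying \cite[the relevant theorem]{kvalheim2021existence} to $(\tilde{M},\tilde{\varphi}_t)$ with observable space $C^r(\tilde{M})$ yields unique Koopman eigenfunctions $\tilde{\phi}_{\mathrm{i}\omega},\tilde{\phi}_{\nu_2},\dots,\tilde{\phi}_{\nu_n}\in C^r(\tilde{M})\setminus\{0\}$ for the operators $\tilde{U}_t g=g\circ\tilde{\varphi}_t$.

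Then I would pull these back: set $\phi_{\bullet}:=\tilde{\phi}_{\bullet}\circ\pi$. By Theorem~\ref{thm:smooth}, $f\mapsto f\circ\pi^{-1}$ is a bijection between $\tilde{C}^r(M)$ and $C^r(\tilde{M})$, so each $\phi_\bullet$ lies in $\tilde{C}^r(M)$ and is nonzero. The eigenvalue relation transfers because $U_t\phi_\bullet=\tilde{\phi}_\bullet\circ\pi\circ\varphi_t=\tilde{\phi}_\bullet\circ\tilde{\varphi}_t\circ\pi=e^{\lambda t}\tilde{\phi}_\bullet\circ\pi=e^{\lambda t}\phi_\bullet$ for all $t\ge 0$, using the conjugacy \eqref{eq:conjugacy}; the invariance of $\tilde{C}^r(M)$ from \eqref{eq:invariance} guarantees $U_t\phi_\bullet$ stays in the space. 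For uniqueness, suppose $\psi\in\tilde{C}^r(M)\setminus\{0\}$ satisfies $U_t\psi=e^{\lambda t}\psi$; then $\psi\circ\pi^{-1}\in C^r(\tilde{M})\setminus\{0\}$ is, by the same conjugation argument run backwards (and well-definedness of $\psi\circ\pi^{-1}$ as in Remark~\ref{rem:projection}), a $C^r$ eigenfunction of $\tilde{U}_t$ with eigenvalue $\lambda$, hence coincides with $\tilde{\phi}_\bullet$ up to scalar by the smooth uniqueness statement, so $\psi$ coincides with $\phi_\bullet$ up to scalar.

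The main obstacle I anticipate is not the functional-analytic transport --- that is essentially bookkeeping given Lemma~\ref{lem:smoothing} and Theorem~\ref{thm:smooth} --- but rather the geometric identification in the middle step: verifying carefully that the image of the hybrid limit cycle under $\pi$ is genuinely a $C^r$ embedded periodic orbit of $\tilde{\varphi}_t$ (the potential subtlety is at $\pi(G)$, where $\pi$ is not injective and where one must use that $R|_G$ is a diffeomorphism and the smoothness-on-the-gluing-surface built into $\Psi$), that its period is $\tau$ with no spurious period halving, that global asymptotic stability is preserved under the semiconjugacy $\pi$ (which is surjective but not injective, so one argues that $\pi$-preimages of neighborhoods are neighborhoods and that attraction in $M$ pushes forward), and that the Floquet exponents of $\tilde{\varphi}_t$ are precisely the $\nu_i$ of the theorem statement --- i.e., pinning down that the "Floquet multiplicities" named in the hypothesis are the eigenvalues of $\mathrm{D}\tilde{P}$ and not some other linearization. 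Once that dictionary is fixed, the rest follows mechanically.
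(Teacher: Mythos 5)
Your proposal follows essentially the same route as the paper: the paper's proof first establishes (as a separate lemma) that $\pi(\Gamma)$ is a globally asymptotically stable limit cycle of $(\tilde{M},\tilde{\varphi}_t)$ with the same period and Floquet exponents, via a conjugated Poincar\'e map $\tilde{P}_{\tilde{\Sigma}}\circ\pi=\pi\circ P_\Sigma$, then invokes \cite[Propositions 3 and 7]{kvalheim2021existence} on $C^r(\tilde{M})$ and transports the eigenfunctions back through $\pi$ using Theorem~\ref{thm:smooth} and the conjugacy \eqref{eq:conjugacy}, exactly as you describe. The geometric identification you flag as the main obstacle is precisely the content of that auxiliary lemma, so your plan matches the paper's proof in both structure and detail.
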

This theorem is closely related to the existence of \emph{linear embeddings} \cite{liu2023non,kvalheim2023linearizability}, which can be identified through data-driven methods \cite{kutz2016dynamic}. 
For a continuous dynamical system $({M},\varphi_t)$, a linear embedding is defined as a continuous map $E:M\hookrightarrow \mathbb{R}^m~(m\geq {\rm dim}\,M)$ such that $E \circ \varphi_t (x)=  {\rm e}^{At} E(x)$ for all $x\in M$, with a matrix $A \in \mathbb{R}^{m\times m}$. 
Through a quotient topology \cite{simic2005towards}, this notion is extended to the hybrid limit-cycling systems via the existence of Koopman principal eigenfunctions (its proof is straightforward from Theorem 2, and so omitted). 
\begin{corollary}
    \label{col:embeddings}
    Consider $H$ to be a hybrid dynamical system satisfying Assumptions \ref{ass:transverseG}-\ref{ass:GESHLC}. 
    There exists an embedding $E:M \hookrightarrow \mathbb{R}^m ~ (m\geq n)$ such that $E \circ \varphi_t (x) ={\rm e}^{At} E (x)$. 
\end{corollary}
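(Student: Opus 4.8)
The plan is to build the embedding explicitly out of the Koopman principal eigenfunctions whose existence and uniqueness are guaranteed by Theorem \ref{thm:existence_KEF}, mirroring the classical construction for smooth limit-cycling systems. First I would recall that $\phi_{{\rm i}\omega}$ is a $C^r$ eigenfunction with eigenvalue ${\rm i}\omega$, so that the pair $(\phi_{{\rm i}\omega}, \overline{\phi_{{\rm i}\omega}})$ spans a two-dimensional $U_t$-invariant subspace of $\tilde{C}^r(M)$ on which $U_t$ acts by the rotation-scaling block $\mathrm{diag}({\rm e}^{{\rm i}\omega t}, {\rm e}^{-{\rm i}\omega t})$; taking real and imaginary parts $\mathrm{Re}\,\phi_{{\rm i}\omega}$, $\mathrm{Im}\,\phi_{{\rm i}\omega}$ gives a real $2\times 2$ block. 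Similarly, each Floquet eigenfunction $\phi_{\nu_i}$ (together with its conjugate when $\nu_i\notin\mathbb{R}$) contributes a real Jordan-type block with exponent $\nu_i$. Collecting these, define $E:M\to\mathbb{R}^m$ by stacking the real and imaginary parts of $\phi_{{\rm i}\omega},\phi_{\nu_2},\ldots,\phi_{\nu_n}$; the intertwining relation $U_t\phi_\lambda={\rm e}^{\lambda t}\phi_\lambda$ translates termwise into $E\circ\varphi_t(x)={\rm e}^{At}E(x)$ for the block-diagonal matrix $A$ assembled from the blocks ${\rm i}\omega$ and $\nu_2,\ldots,\nu_n$.

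The second and more delicate step is to show that $E$ is actually an \emph{embedding}, i.e. a topological embedding of $M$ (equivalently, after Lemma \ref{lem:smoothing}, a smooth embedding of the hybrifold $\tilde{M}$) into $\mathbb{R}^m$. By Theorem \ref{thm:smooth} every component of $E$ factors through $\pi$ as a $C^r$ function on $\tilde{M}$, so it suffices to treat the induced map $\tilde{E}:\tilde{M}\to\mathbb{R}^m$ with $E=\tilde{E}\circ\pi$ and to verify that $\tilde{E}$ is injective and a homeomorphism onto its image. For injectivity and immersion I would invoke the known structure of the principal eigenfunctions on a smooth globally asymptotically stable limit cycle (as in \cite{kvalheim2021existence,mauroy2013isostable}): $\phi_{{\rm i}\omega}$ restricted to the cycle is a nonvanishing map onto a circle, providing a phase coordinate, while $\phi_{\nu_2},\ldots,\phi_{\nu_n}$ supply transverse (isostable-type) coordinates whose joint differential is nondegenerate, so that $(\phi_{{\rm i}\omega},\phi_{\nu_2},\ldots,\phi_{\nu_n})$ together separate points and immerse $\tilde{M}$. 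If $m$ is enlarged if necessary (or one appeals to the fact that $\tilde{M}$ is the basin of a compact attractor, hence has a proper exhaustion), the map is proper onto its image, upgrading the injective immersion to an embedding.

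The main obstacle I expect is precisely this last point — proving that the principal eigenfunctions \emph{globally} separate points of $\tilde{M}$ and not merely locally near the cycle. The uniqueness part of Theorem \ref{thm:existence_KEF} pins down the eigenfunctions but does not by itself say they are injective on the whole basin; in the smooth case this follows from Sternberg-type linearization together with the contraction onto the cycle, and the hybrid case inherits it through the conjugacy \eqref{eq:conjugacy}, but one must be careful that the gluing at $\pi(G)$ does not create points that the chosen finite collection of eigenfunctions fails to distinguish. I would handle this by first establishing the embedding on a neighborhood of the hybrid limit cycle using the coordinate-frame charts of Lemma \ref{lem:existence_of_vector}, then propagating injectivity backward along $\tilde{\varphi}_{-t}$ using that the flow is a $C^r$ diffeomorphism off the guard and that the eigenfunction relations \eqref{eq:KEFomega}--\eqref{eq:KEFnu} are equivariant; since the paper states the corollary's proof is ``straightforward from Theorem 2,'' in the write-up I would keep this argument brief and lean on the cited smooth results \cite{liu2023non,kvalheim2023linearizability,kvalheim2021existence}, noting only that the quotient/hybrifold structure is what makes the transfer legitimate.
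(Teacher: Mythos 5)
Your construction is exactly the route the paper intends: the corollary's (omitted) proof is precisely to stack the principal eigenfunctions from Theorem~\ref{thm:existence_KEF}, let the relations \eqref{eq:KEFomega}--\eqref{eq:KEFnu} produce the block-diagonal $A$, and transfer injectivity from the smooth linear-embedding results on $(\tilde{M},\tilde{\varphi}_t)$ back through $\pi$, with the embedding properly understood as one of the hybrifold $\tilde{M}$ rather than of the disjoint union $M$. Your additional care about global point separation (rather than mere local immersion near the cycle) is warranted and goes slightly beyond what the paper spells out, but it does not change the argument.
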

One can interpret this corollary as indicating that $\tilde{M}$ of Lemma \ref{lem:smoothing} is linearly embedded by a linear immersion $\pi$ into a high-dimensional Euclidean space. 

\section{
An Example
}
\label{sec:example}
Consider the following hybrid dynamical system $H= (J,M,F,G,R)$:
\begin{itemize}
    \item $J=\{1\}$
    \item $M=\{ (x_1,x_2) \in\mathbb{R}^2 ~|~ x_1 \in [1,2],~x_2>0 \}$
    \item $F(x_1,x_2) = -x_1 \left(\frac{\partial}{\partial x_1}\right) - 2x_2 \left(\frac{\partial}{\partial x_2}\right)$
    \item $G=\{ (x_1,x_2) \in \mathbb{R}^2 ~|~ x_1 = 1,~x_2 >0 \}$
    \item $R(1,x_2) = (2,x_2+1)$
\end{itemize}
This system possesses the globally asymptotically stable hybrid limit cycle $\Gamma = \{ (x_1, x_2)\in M ~|~ x_2 - x_1^2/3 = 0 \}$, as shown in Fig. \ref{fig:phase_portrait}. 
It can also be seen that the period of $\Gamma$ is $\tau = \ln 2$, angular frequency is $\omega = 2\pi/\ln 2$, the Floquet multiplicity is $\rho = 1/4$, and the Floquet exponent is $\nu = -2$.
First, we derive $\Psi$ from Eq. \eqref{eq:Psi}. 
\begin{figure*}[t]
  \begin{tabular}{ccc}
    \begin{minipage}[t]{0.33\hsize}
      \centering
      \includegraphics[keepaspectratio, width=1\linewidth]{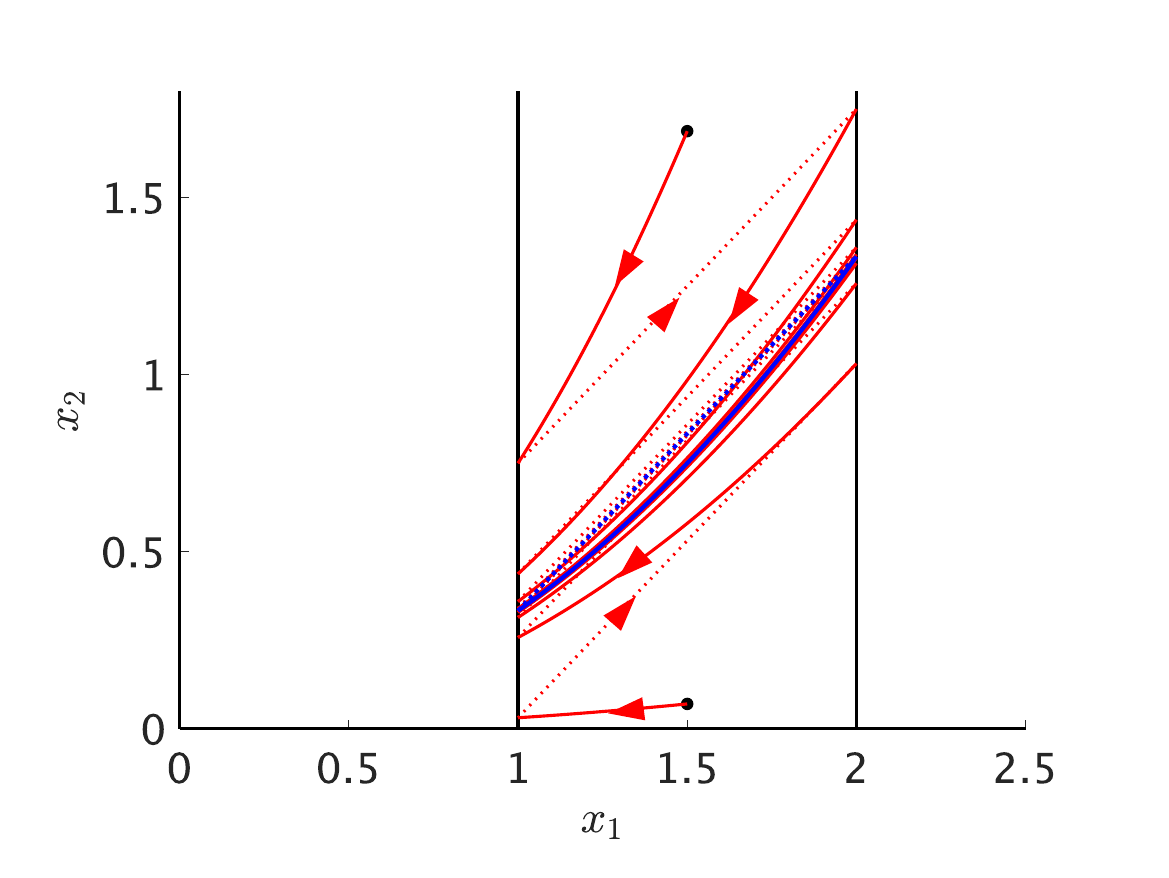}
      \subcaption{Phase portrait of $H$. Red curve represents the integral curve of $F$, red dashed line the reset $R$, blue curve the limit cycle, and blue dashed curve the reset in terms of the limit cycle.}
      \label{fig:phase_portrait}
    \end{minipage} &
    \begin{minipage}[t]{0.3\hsize}
      \centering
      \includegraphics[keepaspectratio, width=1\linewidth]{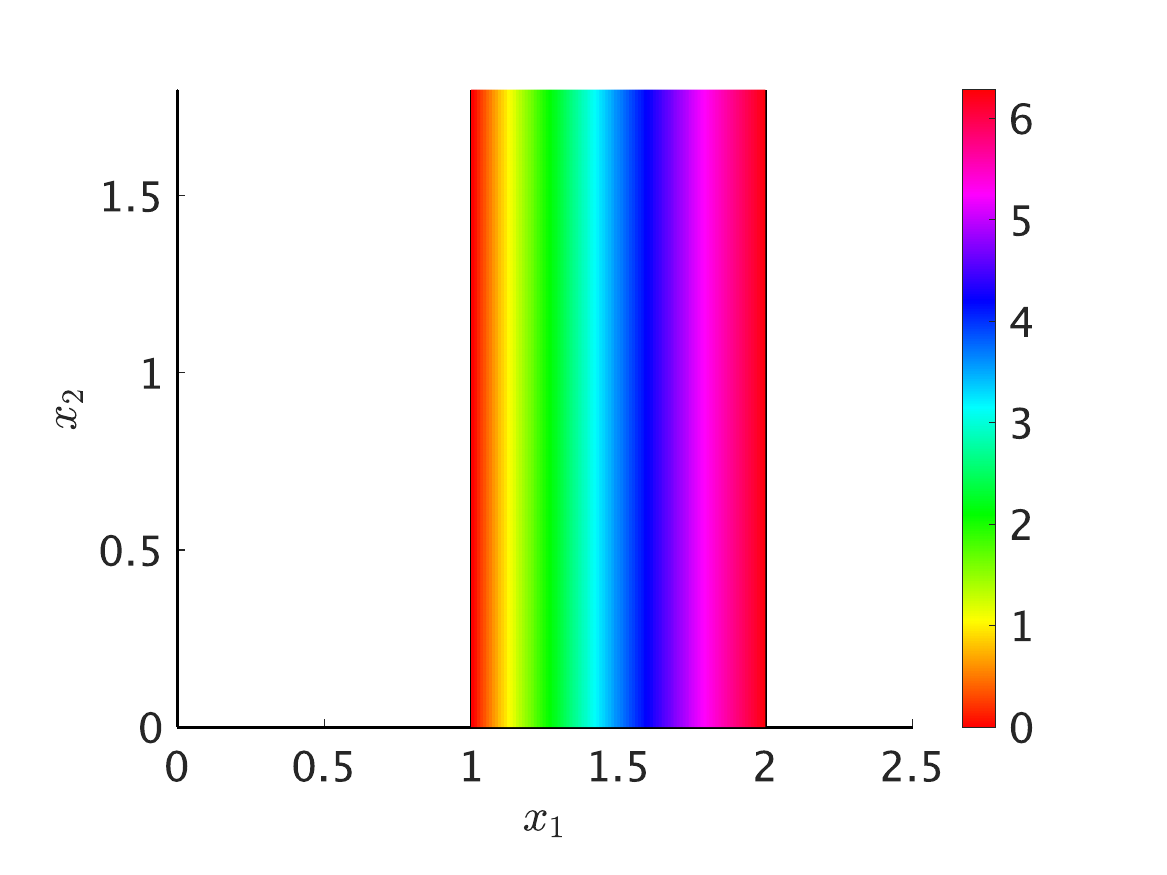}
      \subcaption{The argument of Koopman eigenfunction $\angle \phi_{{\rm i}\omega}$.}
      \label{fig:phi_iomega}
    \end{minipage} &
    \begin{minipage}[t]{0.3\hsize}
      \centering
      \includegraphics[keepaspectratio, width=1\linewidth]{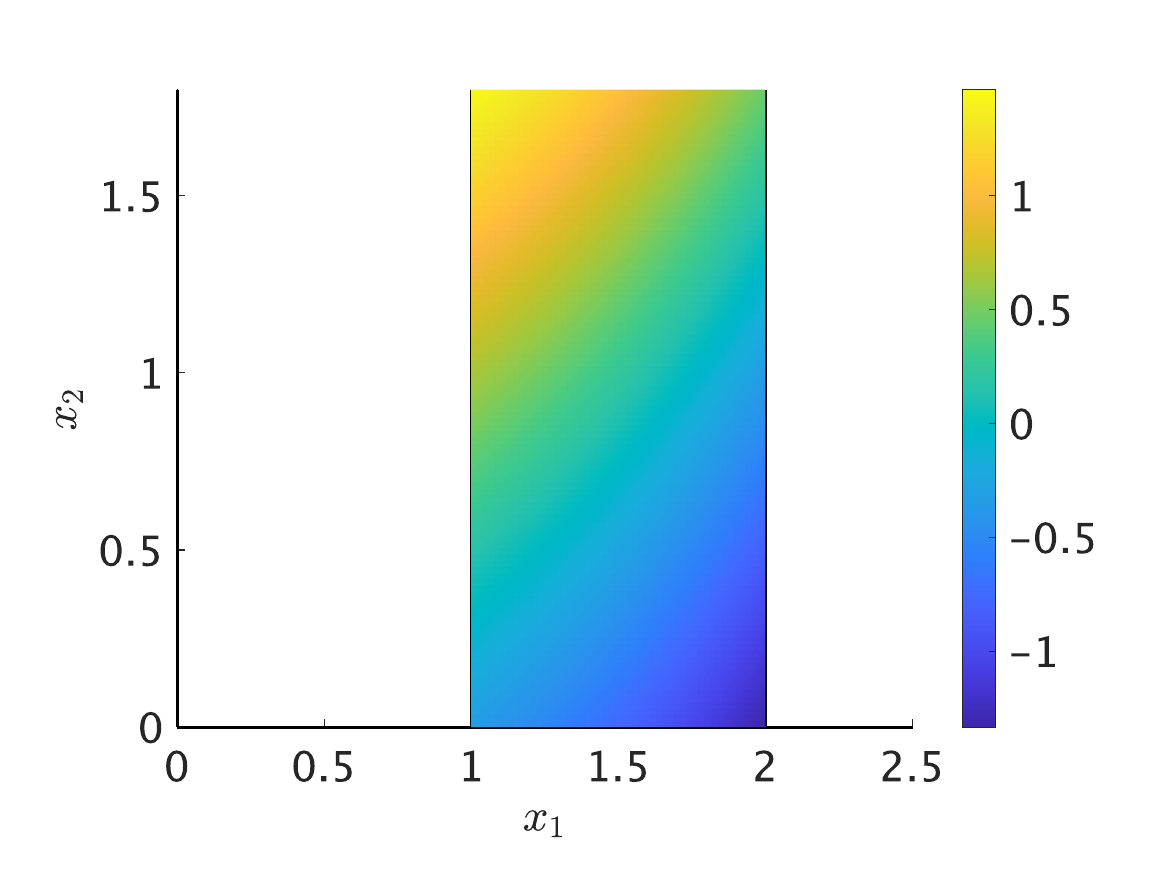}
      \subcaption{Koopman eigenfunction $\phi_{\nu}$.}
      \label{fig:phi_nu}
    \end{minipage}
  \end{tabular}
  \caption{An illustrative example.}
\end{figure*}
A local flow and time-to-impact map can be calculated as $\varphi_t^{(1)}(x_1, x_2) = (x_1 {\rm e}^{-t}, x_2 {\rm e}^{-2t})$ and $\sigma (x_1,x_2) = \ln x_1$, along with $h(x) = (1,x_2/x_1^2)$.
From this, we obtain $\Psi (x_1,x_2)= (2/x_1, x_2/x_1^4)$.
Let $F_2 (x_1, x_2) := x_2 \left( {\partial}/{\partial x_2}\right)$, that commutes with $F$. 
The pushforward of $F_2$ by $\Psi$ can be calculated $
( {\rm D}\Psi F_2 )(x_1, x_2) = {\rm D}\Psi (\Psi^{-1} (x_1,x_2)) F_2 (\Psi^{-1} (x_1,x_2)) = \left(x_2 - {x_1^2}/{4} \right)\left({\partial}/{\partial x_2}\right)$. 
Thus, we can endow $\tilde{C}^k(M)~(k=0,1,\ldots)$. 
For example, the condition \eqref{eq:lie} where $k=1$ is such that for all $x_2 > 0$, 
$$
~\quad f(1,x_2) = f(2,x_2+1),
$$
{\footnotesize$$
\begin{aligned}
    \left.\left( -x_1 \frac{\partial f}{\partial x_1} - 2 x_2 \frac{\partial f}{\partial x_2} \right)\right|_{\substack{x_1 = 1 \\ x_2 = x_2}} = \left.\left( -x_1 \frac{\partial f}{\partial x_1} - 2 x_2 \frac{\partial f}{\partial x_2} \right)\right|_{\substack{x_1 = 2 \\ x_2 = x_2 + 1},}
\end{aligned}
$$}
{$$
\qquad \qquad \quad ~ \left.x_2 \frac{\partial f}{\partial x_2} \right|_{\substack{x_1 = 1}} =  \left.\left(x_2 -\frac{x_1^2}{4}\right)\frac{\partial f}{\partial x_2}\right|_{\substack{x_1 = 2 \\ x_2 = x_2 + 1}}.
$$}
One can check that $\phi_{{\rm i} \omega} (x_1,x_2) := {\rm e}^{{\rm i}2\pi \frac{\ln x_1}{\ln 2}}$ and $\phi_{\nu} (x_1,x_2) :=  x_2 - x_1^2/3$ belong to $\tilde{C}^1(M)$. 
In fact, these observables are Koopman eigenfunctions that belong to $\tilde{C}^\infty(M)$, as shown in Figs. \ref{fig:phi_iomega} and \ref{fig:phi_nu}. 

\section{Conclusion} 
\label{sec:conclusion}
In this paper, we extended the Koopman operator theory developed in smooth dynamical systems to hybrid dynamical systems with hybrid limit cycles. 
To maintain the smooth structure intrinsic to the hybrid systems, we characterized the observable space required for this extension. 
On the defined observable space, we showed the existence and uniqueness of Koopman eigenfunctions. 
Future work includes extending our results to hybrid systems with other types of $\omega$-limit sets. 
Additionally, the results may be applied to the analysis of singularly-perturbed systems \cite{katayama2024koopman}.

\section*{Acknowledgment}

The authors appreciate Professors Alexandre Mauroy and Kazuma Sekiguchi for their valuable comments of the manuscript.

\appendices
\section{Definition of Hybrid Limit Cycles}
\label{app:definition}
Here, we provide a precise definition of hybrid limit cycles. 
Consider a periodic orbit $\Gamma = \{ \varphi_t \}_{t\in [0,\tau)}$ with period $\tau$, where $\varphi_t$ is the flow of the hybrid system $H = (J,M,F,G,R)$ introduced in Section \ref{sec:Hybrid_intro}. 
Due to its periodicity, one can consider the Poincaré map analogously to smooth dynamical systems theory. 
Recall that an associated Poincaré section need to be a codimension-1 manifold that is transversal to the vector field $F$. 
Our definition of the globally asymptotically stable hybrid limit cycle is based on the Poincare map and is equivalent to Lyapunov stability \cite{grizzle2001asymptotically,morris2005restricted}. 
\begin{definition}
\label{def:GEHLC}
    Let $\Sigma \subset M^{(j)}$ be an appropriately chosen Poincaré section and $P_{\Sigma}$ be an associated Poincare map. 
    A \emph{globally asymptotically stable hybrid limit cycle} is defined as a set $\Gamma$ such that for any $j\in J$ and $x \in M^{(j)}$, the trajectory of $P|_{\Sigma}$ from $x$ converges to a fixed point $x^* \in \Gamma \cup \Sigma$. 
\end{definition}
Note that since $F$ is smooth and $R$ is a diffeomorphism, $P|_{\Sigma}$ is smooth, and ${\rm D}P_{\Sigma} (x^*)$ has constant eigenvalues for all choice of $\Sigma$, where ${\rm D}P_{\Sigma} (x^*)$ represents the derivative of $P_{\Sigma}$ at $x^*$.

\section{Proofs}
\label{app:proofs}
{
\subsection{Preliminaries}
Here, we review the basic definitions and a proposition related to smooth manifold theory with reference to \cite{Lee_smooth}. 
We begin by summarizing the notations for an $n$-dimensional $C^r$ manifold $M$. 
An atlas of $M$ is denoted as $\{(U_\alpha,\vartheta_\alpha)\}_{\alpha \in A}$, where $A$ is a set of indices. 
We denote the coordinate map as $\vartheta_\alpha=(\vartheta_\alpha^1, \ldots,\vartheta_\alpha^n)$ and local coordinate as $y^i = \vartheta_\alpha^i (x)$ with $x\in M$. 
The left and right half spaces are defined as $\mathbb{H}_-^n := \{ (y^1, \ldots, y^n) \in \mathbb{R}^n ~|~ y^1 \leq 0 \}$ and $\mathbb{H}_+^n := \{ (y^1, \ldots, y^n) \in \mathbb{R}^n ~|~ y^1 \geq 0 \}$. 
At $x\in M$, a vector field $F:M\to {\rm T}M$ is denoted as $F_x$, the derivative of a diffeomorphism $h:M\to h(M)$ denoted as ${\rm D}h_x$, and the differential form of a function $f:M\to \mathbb{C}$ denoted as ${\rm d}f_x$. 
A pushforward of $F$ by $h$ is denoted as $({\rm D}h F)_{p} = {\rm D}h_{h^{-1} (p)} F_{h^{-1} (p)}$, where $p=h(x)$ (Note that the notation differs from that of the main text). 
The derivative of $f$ with respect to $F$ at $x \in M$ is denoted as $Ff_x$, $L_F f_x$, or ${\rm d}f_x (F)$. 
One can calculate the derivative of $f\circ h$ with respect to $F$ as follows: for $x\in M$
\begin{equation}
    \label{eq:pullback}
    \begin{aligned}
        {\rm d}(f\circ h)_x (F)  = ({\rm D}^* h {\rm d}f)_x (F)= {\rm d}f_{h(x)} ( {\rm D}h F),
    \end{aligned}
\end{equation}
where ${\rm D}^*h$ is the dual operator of ${\rm D}h$, known as a pullback. 

Consider a $n$-dimensional manifold $M$ endowed with an atlas $\{ (U_\alpha,\vartheta_\alpha) \}_{\alpha\in A}$. 
Here, we review the definitions of $C^r$ structures, $C^r$ functions, and $C^r$ vector fields.
\begin{definition}
    \label{def:smooth_structure}
    If for any two charts $(U_\alpha, \vartheta_\alpha)$ and $(U_{\alpha'}, \vartheta_{\alpha'})$ such that $U_\alpha \cap U_{\alpha'} \neq \emptyset$ and the transition map $\vartheta_\alpha \circ \vartheta_{\alpha'}^{-1}$ is $C^r$ diffeomorphic, then $M$ is said to has $C^r$ structures and is said to be a $C^r$ manifold. 
\end{definition}
\begin{definition}
    \label{def:smooth_functions}
    A function $f:M\to \mathbb{C}$ is $C^r$ if for any chart $(U_\alpha, \vartheta_\alpha)$, $f\circ \vartheta_\alpha^{-1}$ is $C^r$. 
\end{definition}
\begin{definition}{(see also \cite[Proposition 8.1]{Lee_smooth})}
    \label{def:smooth_vectorfield}
    If a vector field $F$ is expanded as
    $$F(x) = F^1 (x) \left( \frac{\partial}{\partial y^1} \right)_x + \ldots + F^n (x) \left( \frac{\partial}{\partial y^n} \right)_x, $$
    where $\frac{\partial}{\partial y^1}, \ldots, \frac{\partial}{\partial y^n}$ are coordinate vector fields in terms of a given chart, $F^1,\ldots,F^n:M\to \mathbb{R}$ are called \emph{component functions}. 
    Moreover, a vector field $F:M\to {\rm T}M$ is $C^r$ if, for any chart $(U_\alpha, \vartheta_\alpha)$, all of the component functions are $C^r$. 
\end{definition}
At the end of the review, the following proposition is useful to verify the differentiability of the vector field.
\begin{proposition}
    \label{prop:smooth_vectorfield}
    A component function $F^i$ in terms of a chart $(U_\alpha, \vartheta_\alpha)$ is $C^r$ if and only if ${\rm d}\vartheta_\alpha^i|_{\vartheta_\alpha^{-1} (y)} (F)$ is $C^r$ with respect to $y \in \mathbb{R}^n$. 
\end{proposition}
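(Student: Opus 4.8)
\emph{Proof proposal.} The plan is to reduce the statement to the elementary pointwise identity $F^i(x) = {\rm d}\vartheta_\alpha^i|_x(F)$ on $U_\alpha$ and then to read off the equivalence directly from Definition~\ref{def:smooth_functions}.

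First I would recall that, at each point $x \in U_\alpha$, the coordinate coframe $\{{\rm d}\vartheta_\alpha^1|_x, \ldots, {\rm d}\vartheta_\alpha^n|_x\}$ is the basis of ${\rm T}_x^* M$ dual to the coordinate frame $\{(\partial/\partial y^1)_x, \ldots, (\partial/\partial y^n)_x\}$ of ${\rm T}_x M$; that is, ${\rm d}\vartheta_\alpha^i|_x\bigl((\partial/\partial y^j)_x\bigr) = \delta^i_j$. This follows at once from $\vartheta_\alpha^i \circ \vartheta_\alpha^{-1}(y) = y^i$ together with the definition of the coordinate vector fields as the images of the standard basis of $\mathbb{R}^n$ under ${\rm D}(\vartheta_\alpha^{-1})$.

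Next I would apply the (pointwise linear) covector ${\rm d}\vartheta_\alpha^i|_x$ to the expansion $F(x) = \sum_{j=1}^n F^j(x)\,(\partial/\partial y^j)_x$ of Definition~\ref{def:smooth_vectorfield}, obtaining ${\rm d}\vartheta_\alpha^i|_x(F) = \sum_{j=1}^n F^j(x)\,\delta^i_j = F^i(x)$. Consequently the map $y \mapsto {\rm d}\vartheta_\alpha^i|_{\vartheta_\alpha^{-1}(y)}(F)$ appearing in the statement is precisely $F^i \circ \vartheta_\alpha^{-1}$, i.e.\ the coordinate representation of the component function $F^i$ in the chart $(U_\alpha,\vartheta_\alpha)$. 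Then Definition~\ref{def:smooth_functions} says that $F^i : U_\alpha \to \mathbb{R}$ is $C^r$ if and only if $F^i \circ \vartheta_\alpha^{-1}$ is $C^r$ on the open set $\vartheta_\alpha(U_\alpha) \subseteq \mathbb{R}^n$; passing to any other chart meeting $U_\alpha$ changes this representation only by composition with a $C^r$ transition map (Definition~\ref{def:smooth_structure}), so the single-chart check is unambiguous. Combining this with the identification of the previous step gives the proposition.

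I do not expect a genuine obstacle: the argument is entirely a matter of unwinding the definitions of coordinate frame, coframe, component function, and $C^r$ function. The only point requiring a little care is the duality relation ${\rm d}\vartheta_\alpha^i|_x\bigl((\partial/\partial y^j)_x\bigr) = \delta^i_j$, which underlies the identity $F^i = {\rm d}\vartheta_\alpha^i(F)$ and makes the rest essentially tautological.
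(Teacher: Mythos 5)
Your proof is correct: the duality relation ${\rm d}\vartheta_\alpha^i|_x\bigl((\partial/\partial y^j)_x\bigr) = \delta^i_j$ gives $F^i = {\rm d}\vartheta_\alpha^i(F)$ pointwise, so the quantity in the statement is exactly the coordinate representation $F^i\circ\vartheta_\alpha^{-1}$, and the equivalence then follows from Definition~\ref{def:smooth_functions}. The paper states this proposition without proof, as a standard fact reviewed from \cite{Lee_smooth}, and your definitional unwinding is precisely the argument it implicitly relies on.
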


Before proving main theorems, we introduce some important lemmas. 
Recall collar neighborhoods $N^{(j)}$, time-to-impact maps $\sigma^{(j)}$ and projections $h^{(j)}$ are introduced from Section \ref{sec:Hybrid_intro}, and $\Psi$ in Section \ref{sec:Koopman}. 
\begin{lemma}
    \label{lem:notation}
    \begin{enumerate}[label=(\roman*)]
        \item For any $z\in G^{(j)}$, suppose $t\leq 0$ is such that $\varphi_{t}^{(j)} (z) \in N^{(j)}$. Then the following holds;
        \begin{equation}
        \label{eq:lem1}
            ~~\sigma^{(j)} \circ \varphi_t^{(j)} (z) = -t,
        \end{equation}
        \begin{equation}
            \label{eq:lem2}
            h^{(j)} \circ \varphi_t^{(j)} (z) = z.
        \end{equation}
        \item For any $x\in N^{(j)}$, suppose $t\in\mathbb{R}$ is such that $\varphi_{-t}^{(j)} (x) \in N^{(j)}$. Then the following holds
        \begin{equation}
            \label{eq:lem11}
            ~~\quad \sigma^{(j)} \circ \varphi_{-t}^{(j)} (x) = t + \sigma^{(j)} (x),
        \end{equation}
        \begin{equation}
            \label{eq:lem22}
            h^{(j)} \circ \varphi_{-t}^{(j)} (x) = h^{(j)} (x)
        \end{equation}
        \begin{equation}
        \label{eq:lem3}
            \qquad \varphi_t^{(j+1)} \circ \Psi (x) =\Psi \circ  \varphi_{-t}^{(j)} (x).
        \end{equation}
    \end{enumerate}
\end{lemma}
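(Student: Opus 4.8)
The plan is to derive all five identities from the single geometric fact \eqref{eq:lem1}; the remaining four then reduce to it by the group law $\varphi_{t+s}^{(j)}=\varphi_t^{(j)}\circ\varphi_s^{(j)}$ of the local flows (valid wherever the compositions are defined) together with the definitions of $h^{(j)}$ and $\Psi$.

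To prove \eqref{eq:lem1}, fix $z\in G^{(j)}$ and $t\le 0$ with $y:=\varphi_t^{(j)}(z)\in N^{(j)}$. Flowing forward, $\varphi_{-t}^{(j)}(y)=\varphi_{-t}^{(j)}\circ\varphi_t^{(j)}(z)=z\in G^{(j)}$, so the forward orbit of $y$ does reach $G^{(j)}$ at time $-t\ge 0$. It remains to see that $-t$ is the \emph{first} hitting time, so that it coincides with $\sigma^{(j)}(y)$; this is exactly where Assumption \ref{ass:transverseG} enters. Since $F$ is strictly outward-pointing on $G^{(j)}$, an integral curve meeting $G^{(j)}$ immediately leaves $M^{(j)}$ and cannot re-enter it; combined with Assumption \ref{ass:finitejump}, which prevents the curve from exiting through $\partial M^{(j)}\backslash G^{(j)}$ beforehand, the arc $\{\varphi_s^{(j)}(y): 0\le s<-t\}$ stays in $M^{(j)}\backslash G^{(j)}$. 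Hence $\sigma^{(j)}(y)=-t$, which is \eqref{eq:lem1}. Then \eqref{eq:lem2} is immediate: $h^{(j)}(\varphi_t^{(j)}(z))=\varphi_{\sigma^{(j)}(\varphi_t^{(j)}(z))}^{(j)}(\varphi_t^{(j)}(z))=\varphi_{-t}^{(j)}\circ\varphi_t^{(j)}(z)=z$.

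For part (ii), I would transport the point back to the guard. Given $x\in N^{(j)}$, put $z:=h^{(j)}(x)=\varphi_{\sigma^{(j)}(x)}^{(j)}(x)\in G^{(j)}$, so that $x=\varphi_{-\sigma^{(j)}(x)}^{(j)}(z)$ and $\varphi_{-t}^{(j)}(x)=\varphi_{-(t+\sigma^{(j)}(x))}^{(j)}(z)$. The hypothesis $\varphi_{-t}^{(j)}(x)\in N^{(j)}\subset M^{(j)}$ forces $t+\sigma^{(j)}(x)\ge 0$ (otherwise one would be flowing $z\in G^{(j)}$ strictly forward, which leaves $M^{(j)}$ by Assumption \ref{ass:transverseG}), so \eqref{eq:lem1} applied with base point $z$ and time $-(t+\sigma^{(j)}(x))\le 0$ gives $\sigma^{(j)}(\varphi_{-t}^{(j)}(x))=t+\sigma^{(j)}(x)$, which is \eqref{eq:lem11}. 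Substituting \eqref{eq:lem11} into the definition of $h^{(j)}$ and using the group law yields \eqref{eq:lem22}: $h^{(j)}(\varphi_{-t}^{(j)}(x))=\varphi_{t+\sigma^{(j)}(x)}^{(j)}\circ\varphi_{-t}^{(j)}(x)=\varphi_{\sigma^{(j)}(x)}^{(j)}(x)=h^{(j)}(x)$. Finally, for \eqref{eq:lem3} I would unfold $\Psi$ and insert \eqref{eq:lem11} and \eqref{eq:lem22}: $\Psi(\varphi_{-t}^{(j)}(x))=\varphi_{\sigma^{(j)}(\varphi_{-t}^{(j)}(x))}^{(j+1)}\circ R\circ h^{(j)}(\varphi_{-t}^{(j)}(x))=\varphi_{t+\sigma^{(j)}(x)}^{(j+1)}\circ R\circ h^{(j)}(x)=\varphi_t^{(j+1)}\circ\varphi_{\sigma^{(j)}(x)}^{(j+1)}\circ R\circ h^{(j)}(x)=\varphi_t^{(j+1)}(\Psi(x))$, using the group law of $\varphi^{(j+1)}$ in the last step.

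The only genuinely non-formal point is the ``first hitting time'' claim inside \eqref{eq:lem1}; everything afterwards is bookkeeping with flow composition and the definitions. The one technical item I would keep stating explicitly is the domain/sign condition ($t+\sigma^{(j)}(x)\ge 0$, and more generally that every composition appearing in \eqref{eq:lem3} is evaluated where the relevant local flow is defined, in particular that $t$ is admissible for $\varphi^{(j+1)}$ on $\Psi(N^{(j)})$), which I would justify uniformly from the collar structure $N^{(j)}\cong G^{(j)}\times[0,1)$ together with the strict transversality of $F$ along $G^{(j)}$.
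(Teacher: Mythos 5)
Your proposal is correct and follows essentially the same route as the paper: establish \eqref{eq:lem1}--\eqref{eq:lem2} from the definitions of $\sigma^{(j)}$ and $h^{(j)}$, obtain \eqref{eq:lem11}--\eqref{eq:lem22} by the substitution $z=h^{(j)}(x)$, $x=\varphi^{(j)}_{-\sigma^{(j)}(x)}(z)$, and then derive \eqref{eq:lem3} by unfolding $\Psi$ and applying the preceding identities together with the group law of $\varphi^{(j+1)}$. The only difference is that you spell out the first-hitting-time justification and the sign condition $t+\sigma^{(j)}(x)\ge 0$, which the paper leaves implicit as ``follows directly from the definition.''
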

\begin{proof}
    (i) Eq. \eqref{eq:lem1} follows directly from the definition of $\sigma^{(j)}$. Eq. \eqref{eq:lem2} is also straightforward from the definition $h^{(j)}(x) := \varphi_{\sigma^{(j)}(x)}^{(j)} (x)$. 
    (ii) For any $x\in N^{(j)}$, substituting $z=\varphi_{\sigma^{(j)} (x)} (x)$ and $t = -t'-\sigma^{(j)} (x)$ into Eq. \eqref{eq:lem1}, we can calculate $\sigma^{(j)}\circ \varphi_{-t'-\sigma^{(j)}(x)}^{(j)} (\varphi_{\sigma^{(j)}(x)} (x)) =\sigma^{(j)} \circ \varphi_{-t'}^{(j)} (x) =  t'+\sigma^{(j)} (x)$, which gives Eq. \eqref{eq:lem11}. 
    Similarly, substituting them into Eq. \eqref{eq:lem2}, we can calculate $h^{(j)} \circ \varphi_{-t'-\sigma^{(j)}(x)}^{(j)} (\varphi_{\sigma^{(j)}(x)} (x)) = h^{(j)} \circ \varphi_{-t'}^{(j)} (x) = \varphi_{\sigma^{(j)}(x)} (x) = h^{(j)}(x)$, which gives Eq. \eqref{eq:lem22}.
    For Eq. \eqref{eq:lem3}, by the definition of $\Psi$ in \eqref{eq:Psi}, the right-hand-side of Eq. \eqref{eq:lem3} is $\varphi_{\sigma^{(j)}(\varphi_{-t}^{(j)} (x))}^{(j+1)} \circ R \circ h^{(j)}\circ \varphi_{-t}^{(j)} (x)$. 
    Using Eqs \eqref{eq:lem11} and \eqref{eq:lem22}, this becomes $ \varphi_{t+\sigma^{(j)}(x)}^{(j+1)} \circ R \circ h^{(j)} (x) $. 
    The left-hand-side of Eq. \eqref{eq:lem3} is $\varphi_t^{(j+1)} \circ \varphi_{\sigma^{(j)}(x)}^{(j+1)} \circ R \circ h^{(j)} (x)$ by the definition \eqref{eq:Psi}, which corresponds to the right-hand-side one of Eq. \eqref{eq:lem3}. Thus, Eq. \eqref{eq:lem3} holds.
\end{proof}
\begin{corollary}
    \label{col:dif_Psi}
    For any $x\in N^{(j)}$,
        \begin{equation}
        \label{eq:time-to-impact-dif}
        \qquad \, {\rm d} \sigma^{(j)}_x F = -1,
        \end{equation}
        \begin{equation}
            \label{eq:hF}
            ({\rm D}h^{(j)} F)_x = 0,
        \end{equation}
        \begin{equation}
        \label{eq:Psi_inv_dif}
        \quad ~ ({\rm D}\Psi^{-1} F)_x = -F_x.
        \end{equation}
\end{corollary}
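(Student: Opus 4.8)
The plan is to read off all three identities as the infinitesimal ($t=0$ derivative) form of the relations collected in Lemma~\ref{lem:notation}(ii), using only the chain rule together with the elementary fact that the curve $t\mapsto\varphi_{-t}^{(j)}(x)$ has velocity $-F_x$ at $t=0$, and that for a $C^1$ function $f$ one has $\mathrm{d}f_x(F)=\left.\tfrac{\mathrm{d}}{\mathrm{d}t}\right|_{t=0}f(\varphi_t^{(j)}(x))$ (and likewise with a $C^1$ map in place of $f$). No fixed point or transversality input is needed here beyond what was already used to establish Lemma~\ref{lem:notation}; the corollary is just its derivative at the identity.

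For \eqref{eq:time-to-impact-dif}, I would take Eq.~\eqref{eq:lem11}, which for $x\in N^{(j)}$ and small $t$ reads $\sigma^{(j)}(\varphi_{-t}^{(j)}(x))=t+\sigma^{(j)}(x)$, and differentiate both sides at $t=0$. By the chain rule the left-hand side equals $\mathrm{d}\sigma^{(j)}_x(-F_x)=-\mathrm{d}\sigma^{(j)}_x F$, while the right-hand side equals $1$, whence $\mathrm{d}\sigma^{(j)}_x F=-1$. (Conceptually: flowing forward by $t$ decreases the time-to-impact by exactly $t$, so $L_F\sigma^{(j)}\equiv-1$.) For \eqref{eq:hF}, I would differentiate Eq.~\eqref{eq:lem22}, $h^{(j)}(\varphi_{-t}^{(j)}(x))=h^{(j)}(x)$, which is constant in $t$; at $t=0$ this gives $\mathrm{D}h^{(j)}_x(-F_x)=0$ in $\mathrm{T}_{h^{(j)}(x)}G^{(j)}$, i.e. $(\mathrm{D}h^{(j)}F)_x=0$ (the impact point $h^{(j)}$ is a first integral of the flow restricted to $N^{(j)}$).

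For \eqref{eq:Psi_inv_dif}, since $\Psi=\Psi^{(j)}$ is a diffeomorphism onto its image I would rewrite Eq.~\eqref{eq:lem3} as $\Psi^{-1}\!\bigl(\varphi_t^{(j+1)}(\Psi(x))\bigr)=\varphi_{-t}^{(j)}(x)$ and differentiate at $t=0$. The curve $t\mapsto\varphi_t^{(j+1)}(\Psi(x))$ has velocity $F^{(j+1)}_{\Psi(x)}=F_{\Psi(x)}$ at $t=0$ (note $\Psi(x)\in M^{(j+1)}$, so $F$ there is $F^{(j+1)}$), so the left-hand side differentiates to $\mathrm{D}(\Psi^{-1})_{\Psi(x)}F_{\Psi(x)}$, which by the pushforward convention $(\mathrm{D}hF)_p=\mathrm{D}h_{h^{-1}(p)}F_{h^{-1}(p)}$ of the Preliminaries (with $h=\Psi^{-1}$, $p=x$) is precisely $(\mathrm{D}\Psi^{-1}F)_x$; the right-hand side differentiates to $-F_x$. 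Equating gives $(\mathrm{D}\Psi^{-1}F)_x=-F_x$. I do not expect any genuine obstacle in this proof; the only point demanding care is the bookkeeping of base points in the pushforward notation and keeping track that $F$ on $\Psi(N^{(j)})$ means $F^{(j+1)}$ — everything else is a one-line differentiation of Lemma~\ref{lem:notation}.
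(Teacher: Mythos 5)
Your proof is correct and follows essentially the same route as the paper: Eqs.~\eqref{eq:time-to-impact-dif} and \eqref{eq:hF} by differentiating Eqs.~\eqref{eq:lem11} and \eqref{eq:lem22} at $t=0$, and Eq.~\eqref{eq:Psi_inv_dif} by rewriting Eq.~\eqref{eq:lem3} as $\Psi^{-1}\circ\varphi_t^{(j+1)}\circ\Psi=\varphi_{-t}^{(j)}$ and differentiating at $t=0$. Your bookkeeping of base points in the pushforward convention is consistent with the paper's notation, so nothing is missing.
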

\begin{proof}
    Eq. \eqref{eq:time-to-impact-dif} (and \eqref{eq:hF}) is directly obtained by differentiating Eq. \eqref{eq:lem11} (and \eqref{eq:lem22}) with respect to $t$ and substituting $t=0$ (see Proposition 3.24 and 11.23 of \cite{Lee_smooth}). 
    To show Eq. \eqref{eq:Psi_inv_dif}, because $\Psi|_{N^{(j)}}$ is a diffeomorphism, $\Psi^{-1} \circ \varphi_t^{(j+1)} (x') = \varphi_{-t}^{(j)} \circ \Psi^{-1} (x')$ holds for any $x'\in \Psi(N^{(j)})$. 
    Then, differentiating this with respect to $t$ gives Eq. \eqref{eq:Psi_inv_dif}. 
\end{proof}

\subsection{Proof of Lemma 1}
\begin{proof}
The proof consists of 3 steps; (i) constituting $\tilde{M}$ and $\pi$; (ii) endowing $\tilde{M}$ with a $C^r$ structure and showing $\pi|_{M^{(j)}}$ is a diffeomorphism; (iii) showing $\tilde{\varphi}_t$ is $C^r$. 

(i) Because $R$ is a homeomorphism, an equivalence relation $G\sim R(G)$ can be endowed. According to \cite{simic2005towards,burden2015model}, endowing $M/\sim$ with a quotient topology, the quotient space $\tilde{M} := M/\sim$ becomes an $n$-dimensional topological manifold. 
Let $\pi:M\to M/\sim$ be a canonical projection to the equivalent class. 
Then, $\pi|_{M^{(j)}}: M^{(j)} \to \pi(M^{(j)})$ is a homeomorphism.

(ii) 
Here, we show $\tilde{M}$ possesses a $C^r$ structure with respect to an appropriate atlas. 
First, since $M^{(j)}\backslash (G^{(j)}\cup R(G^{(j-1)}))$ is a $C^r$ manifold with or without boundary, there is an atlas $\{(U_\alpha,\vartheta_\alpha)\}_{\alpha\in A^{(j)}}$ covering $M^{(j)}\backslash (G^{(j)}\cup R(G^{(j-1)}))$, endowing it with a $C^r$ structure. 
One can verify that $\{(\tilde{U}_\alpha, \tilde{\vartheta}_\alpha)\}_{\alpha\in A^{(j)}, j\in J}$, where $\tilde{U}_\alpha:=\pi(U_\alpha)$ and $\tilde{\vartheta}_\alpha = \vartheta_\alpha\circ \pi^{-1})$, covers $\tilde{M}$ except the neighborhood of $\pi(G)$ (``the seams of gluing''), and that their transition maps between these charts are $C^r$. 
It is straightforward that $\pi|_{M^{(j)}}: M^{(j)} \to \pi(M^{(j)})$ is a $C^r$ diffeomorphism, which shows one of the proofs at (ii). 
Our approach to completing the proof of (ii) is to construct charts $(\tilde{V}_\beta, \tilde{\eta}_\beta)$ for $\beta \in B^{(j)}$ that cover the neighborhood of $\pi (G)$, and to show that three transition maps between $\alpha\in A^{(j)}$ and $\beta \in B^{(j)}$, between $\alpha\in A^{(j+1)}$ and $\beta \in B^{(j)}$, and between $\beta\in B^{(j)}$ and $\beta' \in B^{(j)}$, are $C^r$. 
For each $j\in J$, we introduce a $C^r$ atlas on each $(n-1)$-dimensional manifold $G^{(j)}$, denoted by $\{ (V_\beta, \zeta_\beta)\}_{\beta \in B^{(j)}}$, where $\zeta_\beta = (\zeta_\beta^2,\ldots,\zeta_\beta^n)$. 
Define ${ (h^{(j)}) }^{-1} (V_\beta)$ as $${ (h^{(j)}) }^{-1} (V_\beta) := \{ x\in N^{(j)} ~|~ h^{(j)} (x) \in V_\beta \}.$$
This is diffeomorphic to a subset of $\mathbb{H}_-^n$ \cite[Theorem 2.94]{Lee_smooth}, and the collection $\{ {(h^{(j)})}^{-1} (V_\beta) \}_{\beta\in B^{(j)}}$ covers the collar neighborhood $N^{(j)}$. 
Analogous to the collar neighborhood $N^{(j)}$ around $G^{(j)}$, $\Psi (N^{(j)})$ forms a collar neighborhood of $R(G^{(j)}) \subset \partial M^{(j+1)}$. 
Then, $\Psi \left( { (h^{(j)}) }^{-1} (V_\beta) \right)$ is diffeomorphic to a subset of $\mathbb{H}_+^n$ and covers the collar neighborhood $\Psi (N^{(j)})$ of $R(G^{(j)})$. 
\begin{figure}
    \centering
    \includegraphics[width=0.9\linewidth]{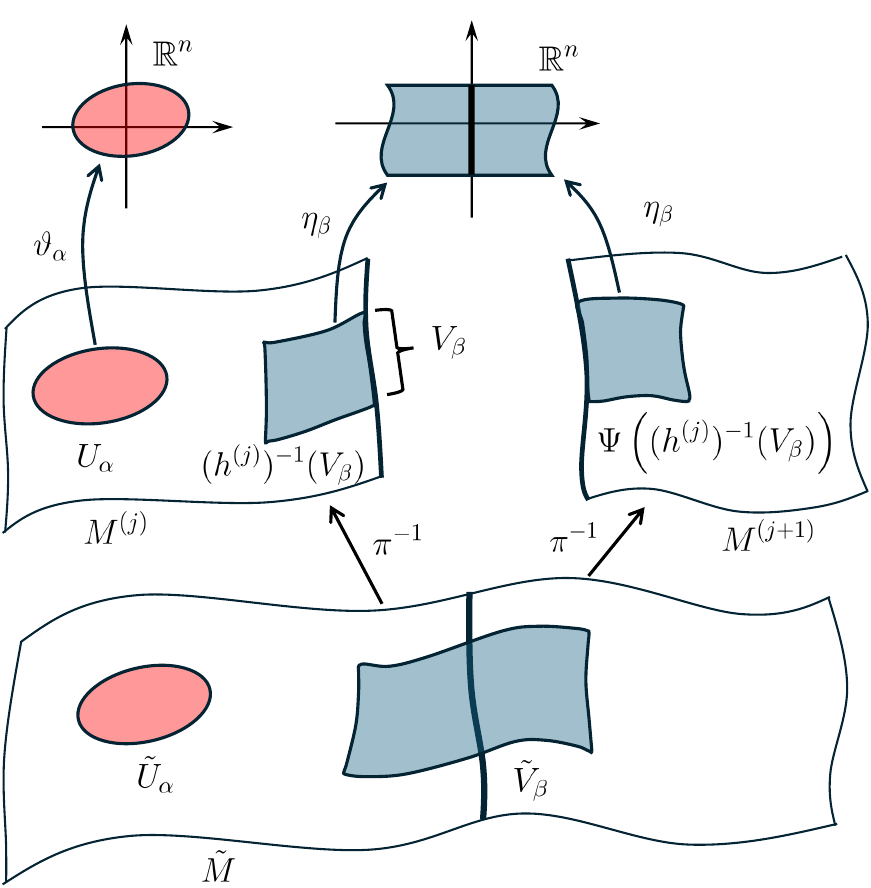}
    \caption{Constructions of the atlas \eqref{eq:atlas}. }
    \label{fig:atlas}
\end{figure}
Consequently, $\tilde{V}_\beta := \pi \left( {(h^{(j)})}^{-1} (V_\beta) \right) \cup \pi \left(   \Psi \left( {(h^{(j)})}^{-1} (V_\beta) \right) \right)$
is diffeomorphic to a subset of $\mathbb{R}^n$ as shown in Fig. \ref{fig:atlas}, and $\{ \tilde{V}_\beta \}_{\beta \in B^{(j)}}$ covers $\pi(N^{(j)} \cup \Psi (N^{(j)}))$ which is a neighborhood of the seam of gluing. 
Here, define $\eta_\beta := (\eta_\beta^1,\ldots, \eta_\beta^n)$ as 
$$
\eta_\beta^1  (x):= \left\{ \begin{alignedat}{2}
    &-\sigma^{(j)} (x) ,&\quad & x \in  {(h^{(j)})}^{-1} (V_\beta)  \\ 
    & \sigma^{(j)} (\Psi^{-1} (x)), & \quad & x \in  \Psi \left( {(h^{(j)})}^{-1} (V_\beta) \right) ,
\end{alignedat} \right.
$$ 
and 
$$
\eta_\beta^i  (x):= \left\{ \begin{alignedat}{2}
    & \zeta_\beta^i (h^{(j)} (x)) ,&\quad & x \in  {(h^{(j)})}^{-1} (V_\beta) \\ 
    & \zeta_\beta^i (h^{(j)} (\Psi^{-1} (x))), & \quad & x \in   \Psi \left( {(h^{(j)})}^{-1} (V_\beta) \right),
\end{alignedat} \right. 
$$
for $i=2,\ldots,n$. 
Then, it can be checked that $\tilde{\eta}_\beta := \eta_\beta\circ \pi^{-1}$ is a homeomorphism from $\tilde{V}_\beta$ to a subset of $\mathbb{R}^n$. 
Therefore, the proof of (ii) is completed by showing that
\begin{equation}
    \label{eq:atlas}
    \left\{ (\tilde{U}_\alpha, \tilde{\vartheta}_\alpha), (\tilde{V}_\beta, \tilde{\eta}_\beta) \right\}_{\alpha\in A^{(j)},\beta\in B^{(j)}, j\in J}
\end{equation}
is an atlas endowing $\tilde{M}$ with a $C^r$ structure. 
As mentioned, the transition map $\tilde{\vartheta}_\alpha \circ \tilde{\vartheta}_{\alpha'}^{-1}$ is obviously $C^r$ for $\alpha \in A^{(j)}$. 
Similarly, for $\beta \in B^{(j)}$, the transition maps $\tilde{\vartheta}_\alpha \circ \tilde{\eta}_{\beta}^{-1}$ and $\tilde{\eta}_\beta \circ \tilde{\vartheta}_{\alpha}^{-1}$ are $C^r$ for both cases of $\alpha \in A^{(j)}$ and $\alpha \in A^{(j+1)}$, because $\varphi_{t}^{(j)}, \sigma^{(j)}, h^{(j)}, \Psi|_{N^{(j)}}$, and $\zeta_\beta$ are $C^r$ on $M$, and $\pi$ is a $C^r$ diffeomorphism. 
To derive the transition map $\tilde{\eta}_\beta \circ \tilde{\eta}_{\beta'}^{-1}$ for $\beta, \beta'\in B^{(j)}$, one needs ${\eta}_{\beta}^{-1}$, which can be obtained by
\begin{equation}
\nonumber
{\eta}_\beta^{-1}(y) = \left\{ \begin{alignedat}{2}
&\varphi_{y^1}^{(j)} (\zeta_\beta^{-1} (y^2,\ldots,y^n)),&\quad & y^1 < 0 \\
&\varphi_{y^1}^{(j+1)} (\Psi (\zeta_\beta^{-1} (y^2,\ldots,y^n))),&\quad & y^1 \geq 0, 
\end{alignedat} \right.
\end{equation}
where $y = (y^1, y^2,\ldots, y^n) \in \mathbb{R}^n$. 
Note that $y^1$ corresponds to the time of the flow. 
From the obvious relationship $\tilde{\eta}_\beta \circ \tilde{\eta}_{\beta'}^{-1} = \eta_\beta \circ \eta_{\beta'}^{-1}$, one can obtain
$$\begin{aligned}
&\tilde{\eta}_\beta^1 \circ \tilde{\eta}_{\beta'}^{-1} (y) \\
&= \left\{ \begin{alignedat}{2}
    &-\sigma^{(j)} \circ \varphi_{y^1}^{(j)} (\zeta_{\beta'}^{-1} (y^2,\ldots,y^n)) ,&\quad& y^1 < 0 \\ 
    &\sigma^{(j)} \circ \Psi^{-1} \circ \varphi_{y^1}^{(j+1)} \circ \Psi (\zeta_{\beta'}^{-1} (y^2,\ldots,y^n)), &\quad& y^1 \geq 0
\end{alignedat} \right. \\
&= \left\{ \begin{alignedat}{2}
    &-\sigma^{(j)} \circ \varphi_{y^1}^{(j)} (\zeta_{\beta'}^{-1} (y^2,\ldots,y^n)) ,&\quad& y^1 < 0 \\ 
    &\sigma^{(j)} \circ \Psi^{-1} \circ \Psi \circ \varphi_{-y^1}^{(j)} (\zeta_{\beta'}^{-1} (y^2,\ldots,y^n)) ,&\quad& y^1 \geq 0
\end{alignedat} \right. \\
&= y^1 \\ 
\end{aligned}
$$
$$
\begin{aligned}
&\tilde{\eta}_\beta^i \circ \tilde{\eta}_{\beta'}^{-1} (y) \\
&= \left\{ \begin{alignedat}{2}
   & \zeta_\beta^i \circ h^{(j)} \circ \varphi_{y^1}^{(j)} (\zeta_{\beta'}^{-1} (y^2,\ldots,y^n)) ,&~\, & y^1 < 0 \\ 
    &\zeta_\beta^i \circ h^{(j)}  \circ \Psi^{-1} \circ \varphi_{y^1}^{(j+1)} \circ \Psi (\zeta_{\beta'}^{-1} (y^2,\ldots,y^n)) ,& ~\, & y^1 \geq 0
\end{alignedat} \right. \\
&= \left\{ \begin{alignedat}{2}
   & \zeta_\beta^i \circ h^{(j)} \circ \varphi_{y^1}^{(j)} (\zeta_{\beta'}^{-1} (y^2,\ldots,y^n)) ,&\quad&  y^1 < 0 \\ 
   & \zeta_\beta^i \circ h^{(j)}  \circ \Psi^{-1} \circ \Psi \circ \varphi_{-y^1}^{(j)} (\zeta_{\beta'}^{-1} (y^2,\ldots,y^n)) ,&\quad&  y^1 \geq 0
\end{alignedat} \right. \\
&= \zeta^{i}_\beta \circ \zeta_{\beta'}^{-1} (y^2,\ldots,y^n). \\ 
\end{aligned}
$$
Here, the transformations from the 1st to the 2nd row and from the 4th to the 5th row follow from Eq. \eqref{eq:lem3}, the 2nd to the 3rd row from Eq. \eqref{eq:lem1}, and the 5th to the 6th from Eq. \eqref{eq:lem2}. 
Therefore, $\tilde{\eta}_\beta \circ \tilde{\eta}_{\beta'}^{-1}$ is $C^r$, proving that the atlas \eqref{eq:atlas} endows $\tilde{M}$ with a $C^r$ structure. 

(iii) For all $t>0$, $\tilde{\varphi}_t$ satisfying Eq. \eqref{eq:conjugacy} is uniquely determined \cite[Theorem A.31]{Lee_smooth}. 
Differentiating Eq. \eqref{eq:conjugacy} with respect to $t$ at $t=0$ yields $
\left. \frac{{\rm d}}{{\rm d}t} \tilde{\varphi}_t ( \pi (x)) \right|_{t=0} = ({\rm D}\pi F)_x
$, implying that the infinitesimal generator of $\tilde{\varphi}_t$ corresponds to ${\rm D}\pi F$. 
This can be verified to correspond to the pushforward of $F$ by $\pi$. 
Thus, the proof of (iii) is completed by showing the pushforward $\tilde{F}:= {\rm D}\pi F$ is $C^r$, which is equivalent to showing that the component functions of $\tilde{F}$, based on the atlas \eqref{eq:atlas}, are $C^r$. 
The component functions in terms of $\tilde{\vartheta}_\alpha$ are obviously $C^r$. 
The differentiabilities of the component functions in terms of $\tilde{\eta}_\beta$ can be confirmed through Proposition \ref{prop:smooth_vectorfield}.
First, we can obtain the following relationship
$$
\begin{aligned}
    &{\rm d} \tilde{\eta}_\beta^i|_{\tilde{\eta}_\beta^{-1} (y)} (\tilde{F}) 
    = {\rm d}  (\eta_{\beta}^i \circ \pi^{-1})_{\tilde{\eta}_\beta^{-1}(y)} ({\rm D} \pi F) 
    \\&= {\rm d} {\eta_{\beta}^i}|_{ \pi^{-1} \circ\tilde{\eta}_\beta^{-1}(y)} ({\rm D} \pi^{-1}{\rm D}\pi F) 
    = {\rm d} {\eta_\beta^i}_{,\eta_\beta^{-1}(y)} (F), 
\end{aligned}
$$
through Eq. \eqref{eq:pullback}. 
Then, from \eqref{eq:time-to-impact-dif}-\eqref{eq:Psi_inv_dif}, we can calculate for the case $i=1$ as
$$
\begin{aligned}
     {\rm d} \tilde{\eta}_\beta^1|_{\tilde{\eta}_\beta^{-1} (y)}
    &= \left\{ \begin{alignedat}{2}
        & -{\rm d} \sigma^{(j)}|_{\eta^{-1}_\beta(y)} (F) ,&\quad& y^1 < 0\\ 
        & {\rm d} (\sigma^{(j)}\circ \Psi^{-1})_{\eta_\beta^{-1}(y)} (F),&\quad& y^1 \geq 0
    \end{alignedat} \right. \\
    &= \left\{ \begin{alignedat}{2}
        & -{\rm d} \sigma^{(j)}|_{\eta^{-1}_\beta(y)} (F) ,&\quad &y^1 < 0\\ 
        & {\rm d} \sigma^{(j)}|_{\Psi^{-1}\circ \eta_\beta^{-1}(y)} ({\rm D}\Psi^{-1} F),&\quad& y^1 \geq 0
    \end{alignedat} \right. \\
    &= \left\{ \begin{alignedat}{2}
        & -{\rm d} \sigma^{(j)}|_{\eta^{-1}_\beta(y)} (F) ,&\quad& y^1 < 0\\ 
        & {\rm d} \sigma^{(j)}|_{\Psi^{-1}\circ \eta_\beta^{-1}(y)} (- F),&\quad& y^1 \geq 0
    \end{alignedat} \right. \\ 
    &= 1.
\end{aligned}
$$
Here, the transformation from the 1st to the 2nd row uses Eq. \eqref{eq:pullback}, the 2nd to the 3rd row uses Eq. \eqref{eq:Psi_inv_dif}, and the 3rd to 4th row uses Eq. \eqref{eq:time-to-impact-dif}. 
Similarly, for the case $i=2,\ldots,n$, we can calculate as
$$
\begin{aligned}
     {\rm d} \tilde{\eta}_\beta^i|_{\tilde{\eta}_\beta^{-1} (y)}
    &= \left\{ \begin{alignedat}{2}
        & {\rm d} (\zeta_\beta^i \circ h^{(j)})_{\eta^{-1}_\beta(y)} (F) ,&\quad& y^1 < 0\\ 
        & {\rm d} (\zeta_\beta^i \circ h^{(j)}\circ \Psi^{-1})_{\eta_\beta^{-1}(y)} (F),&\quad& y^1 \geq 0
    \end{alignedat} \right. \\
    &= \left\{ \begin{alignedat}{2}
        & {\rm d} (\zeta_\beta^i )_{h\circ \eta^{-1}_\beta(y)} ({\rm D}h^{(j)} F) ,&\quad& y^1 < 0\\ 
        & {\rm d} (\zeta_\beta^i )_{h\circ \Psi^{-1} \circ \eta_\beta^{-1}(y)} ({\rm D}h^{(j)} {\rm D} \Psi^{-1} F),&\quad& y^1 \geq 0
    \end{alignedat} \right. \\
    &= 0.
\end{aligned}
$$
Here, the transformation from the 2nd to the 3rd row follows from Eqs. \eqref{eq:hF} and \eqref{eq:Psi_inv_dif}. 
Therefore, we conclude that the component functions in terms of $\tilde{\eta}_\beta$ are $C^r$, which completes the proof of (iii).
\end{proof}

\subsection{Proof of Lemma 2}
Before proving Lemma \ref{lem:existence_of_vector}, we establish the following lemma. 
\begin{lemma}
\label{lem:diffe}
If $G^{(j)}$ is diffeomorphic to a subset of $\mathbb{R}^{n-1}$ for all $j\in J$, then there exist $n-1$ vector fields $F_2,\ldots,F_n$ satisfying (a) and (b) of Lemma \ref{lem:existence_of_vector}. 
\end{lemma}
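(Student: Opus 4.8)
The plan is to reduce the statement to the construction of a single global coordinate chart on each collar neighborhood $N^{(j)}$. Since $N=\amalg_{j\in J}N^{(j)}$ is a disjoint union, it suffices to produce, for each fixed $j\in J$, $C^r$ vector fields $F_2^{(j)},\ldots,F_n^{(j)}$ on $N^{(j)}$ satisfying (a) along $G^{(j)}$ and (b) on $N^{(j)}$, and then to define $F_i$ by $F_i|_{N^{(j)}}:=F_i^{(j)}$. The core idea is that, under the hypothesis, $N^{(j)}$ admits a global chart in which the given field $F$ is the first coordinate vector field; taking the remaining coordinate vector fields as $F_2^{(j)},\ldots,F_n^{(j)}$ then makes (b) automatic, since the coordinate frame of a single chart commutes pairwise, and (a) immediate, since the complementary coordinate vectors span the tangent space of the coordinate slice that represents $G^{(j)}$.

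Concretely, I would first use the hypothesis to fix a $C^r$ diffeomorphism $\zeta^{(j)}=(\zeta^{(j),2},\ldots,\zeta^{(j),n})$ of $G^{(j)}$ onto a subset of $\mathbb{R}^{n-1}$, and then define $\Theta^{(j)}:N^{(j)}\to\mathbb{R}^n$ by
\[
  \Theta^{(j)}(x):=\bigl(-\sigma^{(j)}(x),\;\zeta^{(j),2}(h^{(j)}(x)),\ldots,\zeta^{(j),n}(h^{(j)}(x))\bigr),
\]
which is the globalization, made possible precisely by the hypothesis, of the seam chart $\eta_\beta$ built in the proof of Lemma~\ref{lem:smoothing}. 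This map is $C^r$ because $\sigma^{(j)}$, $h^{(j)}$ (constructed in Section~\ref{sec:Hybrid_intro}) and $\zeta^{(j)}$ are $C^r$; it takes values in $\mathbb{H}_-^n$ since $\sigma^{(j)}\geq 0$; and it is a bijection onto its image with $C^r$ inverse $(\Theta^{(j)})^{-1}(y)=\varphi_{y^1}^{(j)}\bigl((\zeta^{(j)})^{-1}(y^2,\ldots,y^n)\bigr)$. The identities $\Theta^{(j)}\circ(\Theta^{(j)})^{-1}={\rm id}$ and $(\Theta^{(j)})^{-1}\circ\Theta^{(j)}={\rm id}$ follow directly from Eqs.~\eqref{eq:lem1} and \eqref{eq:lem2} of Lemma~\ref{lem:notation} together with $h^{(j)}(x)=\varphi_{\sigma^{(j)}(x)}^{(j)}(x)$; hence $\Theta^{(j)}$ is a $C^r$ diffeomorphism onto a subset of the half-space, under which $G^{(j)}=\{\sigma^{(j)}=0\}$ corresponds to the boundary slice $\{y^1=0\}$.

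Next I would read off $F$ in this chart. Writing $y^i:=\Theta^{(j),i}$, Corollary~\ref{col:dif_Psi} gives ${\rm d}y^1(F)=1$ (from ${\rm d}\sigma^{(j)}_xF=-1$) and ${\rm d}y^i(F)=0$ for $i=2,\ldots,n$ (from $({\rm D}h^{(j)}F)_x=0$), so that $F=\partial/\partial y^1$ on $N^{(j)}$. I then \emph{define} $F_i^{(j)}:=\partial/\partial y^i$ for $i=2,\ldots,n$; these are $C^r$ vector fields on $N^{(j)}$ because $\Theta^{(j)}$ is a $C^r$ diffeomorphism. Property (b) is now immediate, since all pairwise Lie brackets among the coordinate vector fields $\partial/\partial y^1=F,\partial/\partial y^2=F_2^{(j)},\ldots,\partial/\partial y^n=F_n^{(j)}$ of a single chart vanish on $N^{(j)}$. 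Property (a) is likewise immediate: for $z\in G^{(j)}$ we have $\Theta^{(j)}(z)\in\{y^1=0\}$, and the tangent space of that boundary slice is exactly ${\rm span}\{\partial/\partial y^2|_z,\ldots,\partial/\partial y^n|_z\}={\rm span}\{F_2^{(j)}(z),\ldots,F_n^{(j)}(z)\}$, which equals ${\rm T}_zG^{(j)}={\rm T}_zG$.

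I expect the only genuinely technical point --- and hence the main obstacle --- to be the verification that $\Theta^{(j)}$ is a genuine $C^r$ diffeomorphism of the manifold-with-boundary $N^{(j)}$ onto a subset of $\mathbb{H}_-^n$ carrying $G^{(j)}$ to the boundary, as opposed to merely a $C^r$ bijection of sets. This is where the hypothesis (a global chart on $G^{(j)}$) and the collar structure of $N^{(j)}$ (guaranteed by Assumptions~\ref{ass:Nointersect}--\ref{ass:finitejump}, with $\sigma^{(j)}$ smooth by Assumption~\ref{ass:transverseG}) are essential; once this chart is in hand, the verifications of (a) and (b) reduce to the elementary bookkeeping of coordinate frames outlined above.
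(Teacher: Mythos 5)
Your proposal is correct and follows essentially the same route as the paper: the paper's proof simply pushes forward the Euclidean coordinate frame of $\mathbb{R}^{n-1}$ through the given global chart on $G^{(j)}$ and invokes the commuting-frame theorem \cite[Theorem 9.46]{Lee_smooth}. You additionally spell out the extension of these fields to the collar neighborhood $N^{(j)}$ via the chart $\bigl(-\sigma^{(j)},\,\zeta\circ h^{(j)}\bigr)$ --- a step the paper's one-line proof leaves implicit even though Lemma~\ref{lem:existence_of_vector} asks for fields defined on all of $N$ --- so your version is in fact the more complete one.
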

\begin{proof}[Proof of Lemma \ref{lem:diffe}]
    Let $\mu:G^{(j)}\to U \subset \mathbb{R}^{n-1}$ be the diffeomorphism, and define the vector fields $F_i = {\rm D}\mu^{-1} (\partial / \partial y_i)$ as pushforwards, where $(\partial / \partial y_2), \ldots, (\partial / \partial y_n)$ are orthogonal basis vectors of ${\rm T}U$. 
    Then, $F_2, \ldots, F_n$ satisfies (a) and (b) of Lemma \ref{lem:existence_of_vector} by \cite[Theorem 9.46]{Lee_smooth}.
\end{proof}
\begin{proof}[Proof of Lemma \ref{lem:existence_of_vector}]
    One can suppose $G^{(1)}$ is a Poincare section of the hybrid limit cycle. 
    Let $P_{G^{(1)}}:G^{(1)}\mapsto P(G^{(1)})$ be the associated Poincare map. 
    Then, from Assumtion \ref{ass:GESHLC}, the discrete dynamical system $(G^{(1)},P_{G^{(1)}})$ possesses a globally asymptotically stable fixed point $x^* \in {G^{(1)}}$. 
    Then, there exists a chart $(U,\vartheta)$ such that $U$ is a neighborhood of $x^*$ and forward invariant under the mapping $P|_{G^{(1)}}$, implying that $U$ is diffeomorphic to a subset of $\mathbb{R}^{n-1}$. 
    A set $U'$ with $P_{G^{(1)}}(U') \subseteq U$ is also diffeomorphic to a subset of $\mathbb{R}^{n-1}$ since $P_{G^{(1)}}$ is the diffeomorphism. 
    One can choose $U'$ as satisfying $U' \supset U$, since $P_{G^{(1)}}$ has a globally asymptotically stable fixed point. 
    This extension process can be repeated until $U' = G^{(1)}$, which implies that $G^{(1)}$ is diffeomorphic to a subset of $\mathbb{R}^{n-1}$
    Similarly, $G^{(2)},\ldots,G^{(|J|)}$ are diffeomorphic to subsets of $\mathbb{R}^{n-1}$. 
    Thus, Lemma \ref{lem:diffe} completes the proof. 
\end{proof}

\subsection{Proof of Theorem 1}
Before proceeding the proof, we establish the following fact: 
given that the vector fields $F,F_2,\ldots,F_n$ commute, and that for any $z \in G$, ${\rm T}_z M = {\rm span}\{ F_z,{F_2}_{,z},\ldots, {F_n}_{,z} \}$ and ${\rm T}_z G = {\rm span}\{{F_2}_{,z},\ldots, {F_n}_{,z} \}$, there exists a local coordinate expressed in terms of $(y^1,\ldots,y^n)$ such that $F=(\partial/ \partial y^1),F_2 = (\partial/ \partial y^2),\ldots, F_n = (\partial / \partial y^n)$ as shown in \cite[Theorem 9.46]{Lee_smooth}.
Since the proof of $k=0$ is straightforward from Remark \ref{rem:projection}, we begin by considering $k = 1$ and then proceed to the case $k \geq 2$.

\begin{proof}[Proof of $(\Longrightarrow)$ of Eq. \eqref{eq:iff}]
Choose an arbitrary $f\in \tilde{C}^k (M)$. 
Based on the $C^r$ structure endowed by the atlas \eqref{eq:atlas}, it suffices to show that $f\circ \pi^{-1} \circ \tilde{\vartheta}_\alpha^{-1}$ and $f\circ \pi^{-1} \circ \tilde{\eta}_\beta^{-1}$ are $C^k$ on $\tilde{U}_\alpha$ and $\tilde{V}_\beta$, respectively. 
The former is straightforward from the condition $f|_{M^{(j)}} \in C^k (M^{(j)})$ of Definition \ref{def:tildeC}. 
For the letter, we have
\begin{equation}
\label{eq:f_pi_inv_tilde_eta}
\begin{aligned}[b]
&f\circ \pi^{-1} \circ \tilde{\eta}_\beta^{-1} (y) \\
&= \left\{ \begin{alignedat}{2}
& f \circ \varphi_{y^1}^{(j)} (\zeta_\beta^{-1} (y^2,\ldots,y^n)),&\quad & y^1 < 0 \\
&f\circ  \varphi_{y^1}^{(j+1)} \circ \Psi (\zeta_\beta^{-1} (y^2,\ldots,y^n)),&\quad & y^1 \geq 0 
\end{alignedat} \right. \\
&= \left\{ \begin{alignedat}{2}
& f \circ \varphi_{y^1}^{(j)} (\zeta_\beta^{-1} (y^2,\ldots,y^n)),&\quad & y^1 < 0 \\
&f\circ \Psi \circ \varphi_{-y^1}^{(j)} (\zeta_\beta^{-1} (y^2,\ldots,y^n)),&\quad & y^1 \geq 0.
\end{alignedat} \right.
\end{aligned}
\end{equation}
The last row comes from Eq. \eqref{eq:lem3}. 
Thus, it is obvious that $f\circ \pi^{-1} \circ \tilde{\eta}_\beta^{-1}$ is $C^k$ on $\tilde{V}_\beta \backslash \{ y \in \mathbb{R}^n ~|~y^1=0\}$. 

To show the differentiability at $y^1 = 0$, we examine the continuity of the derivatives as $y^1 \uparrow 0$ and $y^1 \downarrow 0$. 
The continuity of the 0th derivative, (i.e., $f\circ \pi^{-1} \circ \tilde{\eta}_\beta^{-1}$ itself), follows directly from Eq. \eqref{eq:f_pi_inv_tilde_eta}. 
Using the relation $\frac{{\rm d}}{{\rm d}t}f\circ \varphi_t^{(j)} (x) = {\rm d}f_{\varphi_t^{(j)} (x)} (F)$ \cite[Proposition 11.23]{Lee_smooth}, differentiating Eq. \eqref{eq:f_pi_inv_tilde_eta} with respect to $y^1$ yields 
\begin{equation}
\label{eq:derivative_1}
\begin{aligned}[b]
\frac{\partial}{\partial y^1} & f\circ \pi^{-1} \circ \tilde{\eta}_\beta^{-1} (y) \\
&= \left\{ \begin{alignedat}{2}
& ~\quad {\rm d}f_{\varphi_{y^1}^{(j)} \circ \zeta_\beta^{-1} (y^2,\ldots, y^n)} (F) ,&\quad & y^1 < 0 \\
&- {\rm d} (f\circ \Psi)_{\varphi_{-y^1}^{(j)} \circ \zeta_\beta^{-1} (y^2,\ldots, y^n)} (F) ,&\quad & y^1 \geq 0 
\end{alignedat} \right. \\
&= \left\{ \begin{alignedat}{2}
& ~\quad{\rm d}f_{\varphi_{y^1}^{(j)} \circ \zeta_\beta^{-1} (y^2,\ldots, y^n)} (F) ,&\quad & y^1 < 0 \\
&- {\rm d} f_{\Psi \circ\varphi_{-y^1}^{(j)} \circ \zeta_\beta^{-1} (y^2,\ldots, y^n)} ({\rm D}\Psi F) ,&\quad & y^1 \geq 0 
\end{alignedat} \right. \\
&= \left\{ \begin{alignedat}{2}
& F f_{\varphi_{y^1}^{(j)} \circ\zeta_\beta^{-1} (y^2,\ldots, y^n)} ,&\quad & y^1 < 0 \\
&F f_{\Psi \circ\varphi_{-y^1}^{(j)} \circ\zeta_\beta^{-1} (y^2,\ldots, y^n)} ,&\quad & y^1 \geq 0. 
\end{alignedat} \right. 
\end{aligned}
\end{equation}
Similarly, using $F_i = (\partial/\partial y^i)$, the differentiation with respect to $y^i$ yields 
\begin{equation}
\label{eq:derivative_i}
\begin{aligned}[b]
\frac{\partial}{\partial y^i} & f\circ \pi^{-1} \circ \tilde{\eta}_\beta^{-1} (y) \\
&= \left\{ \begin{alignedat}{2}
&  {\rm d}(f\circ \varphi_{y^1}^{(j)})_{ \zeta_\beta^{-1} (y^2,\ldots, y^n)} (F_i) ,&\quad & y^1 < 0 \\
& {\rm d} (f\circ \Psi \circ \varphi_{-y^1}^{(j)})_{ \zeta_\beta^{-1} (y^2,\ldots, y^n) } (F_i) ,&\quad & y^1 \geq 0 
\end{alignedat} \right. \\
&= \left\{ \begin{alignedat}{2}
&  {\rm d}f_{\varphi_{y^1}^{(j)} \circ \zeta_\beta^{-1} (y^2,\ldots, y^n)} ({\rm D}\varphi_{y^1}^{(j)} F_i) ,&\quad & y^1 < 0 \\
&  {\rm d} f_{ \Psi \circ \varphi_{-y^1}^{(j)} \circ \zeta_\beta^{-1} (y^2,\ldots, y^n) } ({\rm D}\Psi {\rm D}\varphi_{-y^1}^{(j)} F_i) ,&\quad & y^1 \geq 0 
\end{alignedat} \right. \\
&= \left\{ \begin{alignedat}{2}
& {\rm D}\varphi_{y^1}^{(j)} F_i f_{\varphi_{y^1}^{(j)} \circ \zeta_\beta^{-1} (y^2,\ldots, y^n)}  ,&\quad & y^1 < 0 \\
&  {\rm D}\Psi {\rm D}\varphi_{-y^1}^{(j)} F_i  f_{ \Psi \circ \varphi_{-y^1}^{(j)} \circ \zeta_\beta^{-1} (y^2,\ldots, y^n) }  ,&\quad & y^1 \geq 0 . 
\end{alignedat} \right. 
\end{aligned}
\end{equation}
For $y^1=0$, since $\varphi_{y^1}^{(j)} ={\rm i.d.}, ~{\rm D}\varphi_{y^1}^{(j)} ={\rm i.d.}$, and $\Psi\circ \varphi_{-y^1}^{(j)}\circ \zeta_\beta^{-1} =  \Psi\circ \zeta_\beta^{-1} = R\circ \zeta_\beta^{-1}$, taking the limits $y^1 \uparrow 0$ and $y^1 \downarrow 0$ of Eqs. \eqref{eq:derivative_1} and \eqref{eq:derivative_i} yield
\begin{equation}
\nonumber
\begin{aligned}[b]
    \frac{\partial}{\partial y^1} & f\circ \pi^{-1} \circ \tilde{\eta}_\beta^{-1} (y) \\
    &= \left\{ \begin{alignedat}{2}
        & L_F f (\zeta_\beta^{-1} (y^2,\ldots,y^n)) ,&\quad & as ~ y^1 \uparrow 0 \\ 
        & L_F f (R (\zeta_\beta^{-1} (y^2,\ldots,y^n))) ,&\quad & as ~ y^1 \downarrow 0 ,
    \end{alignedat}\right. 
\end{aligned}
\end{equation}
\begin{equation}
\nonumber
\begin{aligned}[b]
    \frac{\partial}{\partial y^i} & f\circ \pi^{-1} \circ \tilde{\eta}_\beta^{-1} (y) \\
    &= \left\{ \begin{alignedat}{2}
        & L_{ F_i} f  (\zeta_\beta^{-1} (y^2,\ldots,y^n)) ,&\quad & as~ y^1 \uparrow 0 \\ 
        & L_{{\rm D}\Psi  F_i} f (R  (\zeta_\beta^{-1} (y^2,\ldots,y^n))) ,&\quad & as~ y^1 \downarrow 0 .
    \end{alignedat}\right. 
\end{aligned}    
\end{equation}
Here, we express the vector fields, which act as differential operators, in terms of Lie derivatives. 
From the conditions \eqref{eq:lie}, we can verify that the 1st derivatives of $f\circ \pi^{-1} \circ \tilde{\eta}_\beta^{-1}$ are continuous at $y^1=0$. 

Let $k \geq 2$.
To calculate the 2nd derivatives, we introduce functions $f_{y^i}~(i=1,\ldots,n)$ defined on a subset of $N^{(j)}\cup \Psi(N^{(j)})$ such that $f_{y^1} := Ff$ and 
$$ f_{y^i} (x):= \left\{
\begin{alignedat}{2}
    &{\rm D} \varphi_{-\sigma^{(j)}(x)}^{(j)} F_i f|_x ,&\quad& x\in N^{(j)} \\
    &{\rm D} \Psi {\rm D} \varphi_{\sigma^{(j)}(\Psi^{-1} (x))}^{(j)} F_i f|_x , &\quad &x\in \Psi(N^{(j)}).
\end{alignedat}\right.
$$
One can verify that $\frac{\partial}{\partial y^i} f\circ \pi^{-1} \circ \tilde{\eta}_\beta^{-1} = f_{y^i}\circ \pi^{-1} \circ \tilde{\eta}_\beta^{-1}$. 
Using these functions, we can calculate the 2nd derivatives of $f\circ \pi^{-1} \circ \tilde{\eta}_\beta^{-1}$ at $y^1 = 0$ and obtain results consistent with those of the first derivatives.
By applying this process iteratively, we can show the continuities of the high order derivatives of $f\circ \pi^{-1} \circ \tilde{\eta}_\beta^{-1}$ at $y^1=0$ for all orders up to $k$. 
Therefore, $f\circ \pi^{-1} $ is $C^k$ across all charts, resulting in $f \circ \pi^{-1} \in {C}^k(\tilde{M})$. 
\end{proof}

\begin{proof}[Proof of ($\Longleftarrow$) of Eq. \eqref{eq:iff}]
 Assuming $f\notin {\tilde{C}}^k (M)$, similar reasoning as in ($\Longrightarrow$) leads to $f \circ \pi^{-1} \notin {C}^k(\tilde{M})$, implying the contraposition holds.
\end{proof}

\begin{proof}[Proof of Eq. \eqref{eq:invariance}]
    $f \in {\tilde{C}}^k(M) \iff f \circ \pi^{-1} \in {C}^k(M) \Longrightarrow f \circ \pi^{-1} \circ \tilde{\varphi}_t \in {C}^k(M) \iff f \circ \pi^{-1} \circ \tilde{\varphi}_t \circ \pi \in {\tilde{C}}^k(M) \iff f \circ \varphi_t \in {\tilde{C}}^k(M) \iff U_t f \in {\tilde{C}}^k(M) .$
\end{proof}

\subsection{Proof of Theorem 2}
Before proving Theorem \ref{thm:existence_KEF}, we establish the following lemma.
\begin{lemma}
    \label{lem:existence_of_LC}
    Consider $H$ to be a hybrid dynamical system satisfying Assumptions \ref{ass:transverseG}-\ref{ass:GESHLC}. 
    Then, a $C^r$ dynamical system $(\tilde{M},\tilde{\varphi}_t)$ linked by Eq. \eqref{eq:conjugacy} has a globally asymptotically stable limit cycle $\pi(\Gamma)$, which retains the same period and the Floquet exponents as the hybrid case. 
\end{lemma}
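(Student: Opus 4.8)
The plan is to transport the globally asymptotically stable hybrid limit cycle $\Gamma$ of $H$ to $\tilde{M}$ through $\pi$ and verify each required property from the conjugacy \eqref{eq:conjugacy} together with the fact (Lemma~\ref{lem:smoothing}) that each $\pi|_{M^{(j)}}$ is a $C^r$ diffeomorphism.

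First I would show that $\pi(\Gamma)$ is a $C^r$ periodic orbit of $\tilde{\varphi}_t$ with the same period $\tau$. If $\varphi_\tau(x)=x$ for $x\in\Gamma$, then \eqref{eq:conjugacy} gives $\tilde{\varphi}_\tau(\pi(x))=\pi(\varphi_\tau(x))=\pi(x)$, so $\pi(x)$ is $\tilde{\varphi}_t$-periodic with minimal period $\tilde{\tau}$ dividing $\tau$. To exclude $\tilde{\tau}<\tau$, I would pick a base point $x^{*}\in\Gamma$ lying in the interior of some $M^{(j)}$ and away from every guard and reset image (possible since $\Gamma$ is not contained in $G\cup R(G)$); then $\pi^{-1}(\pi(x^{*}))$ is a singleton, so $\tilde{\varphi}_{\tilde{\tau}}(\pi(x^{*}))=\pi(x^{*})$ forces $\varphi_{\tilde{\tau}}(x^{*})=x^{*}$, contradicting minimality of $\tau$ unless $\tilde{\tau}=\tau$. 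Since $\tilde{\varphi}_t$ is $C^r$ and $\tau>0$, $\pi(\Gamma)$ is a genuine embedded $C^r$ periodic orbit.

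Next I would identify the Poincar\'e data. Choose a Poincar\'e section $\Sigma_0\subset\mathrm{int}\,M^{(1)}$ transversal to $F$ at $x^{*}\in\Gamma$; then $\Sigma:=\pi(\Sigma_0)$ is a codimension-$1$ $C^r$ submanifold of $\tilde{M}$ transversal to the generator of $\tilde{\varphi}_t$, because $\pi|_{M^{(1)}}$ is a $C^r$ diffeomorphism carrying $F|_{M^{(1)}}$ to that generator. For $z\in\Sigma_0$ the orbit $\tilde{\varphi}_t(\pi(z))=\pi(\varphi_t(z))$ meets $\Sigma$ exactly when $\varphi_t(z)$ meets $\Sigma_0$ (the gluing relation is trivial near $\Sigma_0$), so the first-return times coincide and the Poincar\'e maps are conjugate, $\tilde{P}_{\Sigma}\circ\pi|_{\Sigma_0}=\pi|_{\Sigma_0}\circ P_{\Sigma_0}$. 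Differentiating at $x^{*}$ shows $\mathrm{D}\tilde{P}_{\Sigma}(\pi(x^{*}))$ and $\mathrm{D}P_{\Sigma_0}(x^{*})$ are similar matrices, hence share the eigenvalues $\rho_2,\dots,\rho_n$; combined with period $\tau$ this gives the same Floquet exponents $\nu_i=\ln\rho_i/\tau$ and, by the section-independence noted after Definition~\ref{def:GEHLC}, the identification is intrinsic.

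Finally I would assemble global asymptotic stability. Since all $|\rho_i|<1$, $\pi(\Gamma)$ is a hyperbolic stable periodic orbit of the $C^r$ flow $\tilde{\varphi}_t$, hence locally asymptotically stable. For global attractivity, given any $\tilde{x}\in\tilde{M}$ choose $x\in\pi^{-1}(\tilde{x})$; Assumption~\ref{ass:GESHLC} (equivalently Lyapunov stability of $\Gamma$, by the remark after Definition~\ref{def:GEHLC}) gives $\varphi_t(x)\to\Gamma$ as $t\to\infty$, and continuity of $\pi$ with \eqref{eq:conjugacy} yields $\tilde{\varphi}_t(\tilde{x})=\pi(\varphi_t(x))\to\pi(\Gamma)$. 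A locally asymptotically stable periodic orbit that attracts the whole state space is globally asymptotically stable, finishing the proof. I expect the main obstacle to be the rigorous verification that the hybrid Poincar\'e map and $\tilde{P}_{\Sigma}$ are genuinely conjugate — matching return times and the jump convention across the seam $\pi(G)$ — and, secondarily, making precise the passage from the discrete convergence in Definition~\ref{def:GEHLC} to continuous-time convergence $\varphi_t(x)\to\Gamma$ in $M$.
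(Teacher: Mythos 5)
Your proposal is correct and follows essentially the same route as the paper: transport $\Gamma$ through $\pi$, establish the conjugacy $\tilde{P}_{\tilde{\Sigma}}\circ\pi = \pi\circ P_{\Sigma}$ of Poincar\'e maps to conclude that their derivatives share eigenvalues (hence the Floquet exponents agree), and pull global attraction back through the continuity of $\pi$ and Eq.~\eqref{eq:conjugacy}. The extra details you supply (minimality of the period via a base point off the guards, and the discrete-to-continuous convergence issue) are refinements of steps the paper treats as immediate, not a different argument.
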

\begin{proof}
    It is straightforward that $\pi (\Gamma)$ is a periodic orbit of $(\tilde{M},\tilde{\varphi}_t)$ with the same period as $\Gamma$. 
    Consider any $\tilde{x}\in \tilde{M}$, then we can take $x=\pi^{-1} (\tilde{x}) \in M$. 
    By Assumption \ref{ass:GESHLC} and Definition \ref{def:GEHLC}, there exists a Poincare map $P_\Sigma$ with a Poincare section $\Sigma \in M^{(j)}$ such that $x$ belongs $\Sigma$ and the iterates of $P_\Sigma$ to $x$ converges to $x^* \in \Gamma \cap \Sigma$. 
    For the system $(\tilde{M},\tilde{\varphi}_t)$, one can consider a Poincare map $\tilde{P}_{\tilde{\Sigma}}$ with a Poincare section $\tilde{\Sigma} = \pi (\Sigma)$ such that 
    \begin{equation}
        \label{eq:lem5}
        \tilde{P}_{\tilde{\Sigma}} \circ \pi = \pi \circ P_\Sigma.
    \end{equation} 
    The iterates of $\tilde{P}_{\tilde{\Sigma}}$ to $\tilde{x}$ converges to $\pi(x^*) \in \pi(\Gamma) \cap \pi(\Sigma)$, implying $\pi(\Gamma)$ is a globally asymptotically stable limit cycle. 
    
    For chosen $\Sigma$, $\pi|_{\Sigma}$ is diffeomorphic to its image from Lemma \ref{lem:smoothing}. 
    Then, ${\rm D} \tilde{P}_{\tilde{\Sigma}}$ and ${\rm D} P_\Sigma$ share the same eigenvalues since they are linked by Eq. \eqref{eq:lem5}. 
    By recalling the correspondence between eigenvalues of the derivative of the Poincare map and the Floquet exponents, the proof is completed.
\end{proof}
\begin{proof}[Proof of Theorem 2]
    According to Lemma \ref{lem:existence_of_LC}, the $C^r$ dynamical system $(\tilde{M},\tilde{\varphi}_t)$ has a globally asymptotically stable limit cycle with period $\tau$, fundamental frequency $\omega = 2\pi/\tau$, and Floquet exponents $\nu_2, \ldots, \nu_n$. 
    Here, we introduce the Koopman operator $\tilde{U}_t$ for the dynamical system $(\tilde{M},\tilde{\varphi}_t)$ as $\tilde{U}_t \tilde{f} = \tilde{f} \circ \tilde{\varphi}_t,~f\in C^r (\tilde{M})$. 
    Since $\nu_2, \ldots, \nu_n$ satisfy $r$-nonresonant and spectral spread conditions, there exist eigenfunctions $\tilde{\phi}_{{\rm i}\omega},~\phi_{\nu_2},\ldots, \phi_{\nu_n}$ of $\tilde{U}_t$ such that 
    $$ \tilde{U}_t \tilde{\phi}_{{\rm i}\omega} =  \tilde{\phi}_{{\rm i}\omega}\circ \tilde{\varphi}_t = {\rm e}^{{\rm i}\omega t} \tilde{\phi}_{{\rm i}\omega},\quad \tilde{\phi}_{{\rm i}\omega}\in {C}^r(\tilde{M})\backslash \{ 0 \}
    $$
    $$ \tilde{U}_t \tilde{\phi}_{\nu_i} = \tilde{\phi}_{\nu_i}\circ \tilde{\varphi}_t = {\rm e}^{\nu_i t} \tilde{\phi}_{\nu_i},\quad \tilde{\phi}_{\nu_i}\in {C}^r(\tilde{M})\backslash \{ 0 \}
    $$
    and are uniquely determined \cite[Proposition 3 and 7]{kvalheim2021existence}. 
    According to theorem \ref{thm:smooth}, $\phi_{{\rm i}\omega}:= \tilde{\phi}_{{\rm i}\omega} \circ \pi^{-1}\in \tilde{C}^r(M)$ and $\phi_{\nu_i} :=\tilde{\phi}_{\nu_i}\circ \pi^{-1} \in \tilde{C}^r(M)$.
    One can calculate that for $\lambda = {\rm i}\omega, \nu_2,\ldots,\nu_n$,  $$U_t \phi_\lambda = \tilde{\phi}_\lambda \circ \pi^{-1} \circ \varphi_t = \tilde{\phi}_\lambda \circ \tilde{\varphi}_t \circ \pi^{-1} = {\rm e}^{\lambda t} \tilde{\varphi}_\lambda \circ \pi^{-1} = {\rm e}^{\lambda t} {\phi}_\lambda.$$ 
    Thus, $\phi_{{\rm i}\omega},~\phi_{\nu_2},\ldots,\phi_{\nu_n}$ are Koopman eigenfunctions such that \eqref{eq:KEFomega} and \eqref{eq:KEFnu} hold.     
\end{proof}
}

\bibliographystyle{IEEEtran}
\bibliography{Refs}

\end{document}